\tikzset{black node/.style={draw, circle, fill = black, minimum size = 5pt, inner sep = 0pt}}
\tikzset{normal/.style = {draw=none, fill = none, minimum size =0, rectangle}}
\newcommand{\R}{\mathbb{R}}
\newcommand{\Z}{\mathbb{Z}}
\newcommand{\Q}{\mathbb{Q}}
\DeclareMathOperator{\OPT}{\mathrm{OPT}}
\DeclareMathOperator{\LP}{\mathrm{LP}}
\DeclareMathOperator{\SA}{\mathsf{SA}}
\newcommand{\eps}{\varepsilon}
\newcommand{\cVD}{\textsc{Cluster-VD}}
\newcommand{\cVDapx}{\textsc{Cluster-VD-apx}}
\newcommand{\VC}{\textsc{Vertex Cover}}
\newcommand{\FVST}{\textsc{FVST}}
\newtheorem{theorem}{Theorem}
\newaliascnt{prop}{theorem}
\newtheorem{prop}[prop]{Proposition}
\newaliascnt{lemma}{theorem}
\newtheorem{lemma}[lemma]{Lemma}
\newaliascnt{observation}{theorem}
\newaliascnt{corollary}{theorem}
\newaliascnt{conjecture}{theorem}
\newaliascnt{claim}{theorem}
\theoremstyle{definition}
\begin{document}
\title[A Tight Approximation Algorithm for CVD]{A Tight Approximation Algorithm for the\\ Cluster Vertex Deletion Problem}
%
%


%
\author[M.~Aprile]{Manuel Aprile}
\author[M.~Drescher]{Matthew Drescher}
\author[S.~Fiorini]{Samuel Fiorini}
\author[T.~Huynh]{Tony Huynh}
\address[M.~Aprile, M.~Drescher, S.~Fiorini]{\newline D\'epartement de Math\'ematique
\newline Universit\'e libre de Bruxelles
\newline Brussels, Belgium}
\email{manuelf.aprile@gmail.com, knavely@gmail.com, sfiorini@ulb.ac.be}
\address[T.~Huynh]{\newline School of Mathematics
\newline Monash University
\newline Melbourne, Australia}
\email{tony.bourbaki@gmail.com}

\thanks{This project was supported by ERC Consolidator Grant 615640-ForEFront. Samuel Fiorini and Manuel Aprile are also supported by FNRS grant T008720F-35293308-BD-OCP. Tony Huynh is also supported by the Australian Research Council.}

\date{\today}
\sloppy

%
\begin{abstract}
We give the first $2$-approximation algorithm for the cluster vertex deletion problem. This is tight, since approximating the problem within any constant factor smaller than $2$ is UGC-hard. Our algorithm combines the previous approaches, based on the local ratio technique and the management of true twins, with a novel construction of a ``good'' cost function on the vertices at distance at most $2$ from any vertex of the input graph.

As an additional contribution, we also study cluster vertex deletion from the polyhedral perspective, where we prove almost matching upper and lower bounds on how well linear programming relaxations can approximate the problem.  

\keywords{Approximation algorithm \and Cluster vertex deletion \and Linear programming relaxation \and Sherali-Adams hierarchy.}
\end{abstract}
\maketitle 

\section{Introduction} \label{sec:intro}

A \emph{cluster graph} is a graph that is a disjoint union of complete graphs. 
Let $G$ be any graph. A set $X \subseteq V(G)$ is called a \emph{hitting set} if $G - X$ is a cluster graph. Given a graph $G$ and (vertex) cost function $c : V(G) \to \Q_{\geqslant 0}$, the \emph{cluster vertex deletion} problem (\cVD) asks to find a hitting set $X$ whose cost $c(X) := \sum_{v \in X} c(v)$ is minimum. We denote by $\OPT(G,c)$ the minimum cost of a hitting set.

If $G$ and $H$ are two graphs, we say that $G$ \emph{contains} $H$ if some \emph{induced} subgraph of $G$ is isomorphic to $H$. Otherwise, $G$ is said to be \emph{$H$-free}. Denoting by $P_k$ the path on $k$ vertices, we easily see that a graph is a cluster graph if and only if it is $P_3$-free. Hence, $X \subseteq V(G)$ is a hitting set if and only if $X$ contains a vertex from each induced $P_3$.

\cVD{} has applications in graph modeled data clustering in which an unknown set of samples may be contaminated. An optimal solution for \cVD{} can recover a clustered data model, retaining as much of the original data as possible~\cite{HKMN10}. Vertex deletion problems such as \cVD{}, where one seeks to locate vertices whose removal leaves a graph with desirable properties, often arise when measuring robustness and attack tolerance of real-life networks \cite{albert2000error, aprile2019graph, jahanpour2013analysis}.

From what precedes, \cVD{} is a hitting set problem in a $3$-uniform hypergraph, and as such has a ``textbook'' $3$-approximation algorithm (see for instance the introduction of \cite{fiorini2016improved}). 
Moreover, the problem has an approximation-preserving reduction from \VC. By adding a pendant edge to each vertex of $G$, one checks that solving \cVD{} on the new graph is equivalent to solving \VC{} on the original graph (see Proposition \ref{prop:EFlowerbound} for more details). Hence, obtaining a $(2-\eps)$-approximation algorithm for some $\eps > 0$ would contradict either the Unique Games Conjecture or P $\neq$ NP. 

The first non-trivial approximation algorithm for \cVD{} was a $5/2$-approximation due to You, Wang and Cao~\cite{YWC17}. Shortly afterward, Fiorini, Joret and Schaudt gave a $7/3$-approximation~\cite{fiorini2016improved}, and subsequently a $9/4$-approximation~\cite{FJS20}. 

\subsection{Our contribution}\label{sec:ourcontribution}

In this paper, we close the gap between $2$ and $9/4 = 2.25$ and prove the following \emph{tight} result.

\begin{theorem}\label{thm:2apx}
\cVD{} has a $2$-approximation algorithm.
\end{theorem}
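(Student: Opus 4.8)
The plan is to run a \emph{local ratio} recursion on the cost function, reducing everything to a single structural lemma that produces a ``good'' cost function supported near one vertex.

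\emph{Normalization.} If some vertex $v$ has $c(v)=0$, put $v$ into the solution and recurse on $G-v$; if $G$ has a connected component that is a clique, delete it. If $u,v$ are true twins (that is, $uv\in E(G)$ and $N[u]=N[v]$), contract them into a single vertex of cost $c(u)+c(v)$: this is approximation preserving, because no induced $P_3$ of $G$ uses both $u$ and $v$ (any common neighbour would be a non-neighbour of one of them, contradicting $N[u]=N[v]$), so any solution of the contracted instance lifts back to $G$ with the same cost, and conversely. Hence we may assume $(G,c)$ is \emph{normalized}: $c>0$ everywhere, $G$ has no clique component, and $G$ has no true twins.

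\emph{The local ratio step.} If $G$ is a cluster graph, return $\emptyset$. Otherwise the heart of the argument is to exhibit a vertex $v$ with $N(v)$ not a clique together with a cost function $\delta$ satisfying $0\ne\delta\le c$, $\supp(\delta)\subseteq N_2[v]$ (the closed ball of radius $2$ around $v$), and
\[
2\,\OPT(G,\delta)\ \ge\ \delta(V(G)).
\]
Given such a $\delta$, recurse on $(G,c-\delta)\ge 0$ to obtain a hitting set $X$, and return $X$. For correctness: by induction $X$ is a $2$-approximation for $(G,c-\delta)$; and since $\delta(Y)\le\delta(V(G))\le 2\,\OPT(G,\delta)$ for \emph{every} hitting set $Y$, in particular $X$ is a $2$-approximation for $(G,\delta)$; adding these and using $\OPT(G,\delta)+\OPT(G,c-\delta)\le\OPT(G,c)$ (apply both optima to a fixed optimal solution of $(G,c)$) gives $c(X)\le 2\,\OPT(G,c)$. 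Choosing $\delta$ maximal along its support zeroes out a new vertex each time, which yields termination in polynomial time with the usual bookkeeping.

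\emph{The good cost function (the crux).} Fix $v$ with $A:=N(v)$ not a clique. For any hitting set $X$ the induced subgraph $G[A\setminus X]$ is a cluster graph, so $\delta(X\cap A)$ is at least the minimum $\delta$-cost of destroying all $P_3$'s inside $A$; hence it suffices to choose $\delta$ supported on $A$ so that this local optimum is at least $\tfrac12\delta(A)$, or, when $v$ cannot be avoided for free, to spread balanced weight over $\{v\}\cup A$ together with a few vertices at distance exactly $2$. In favourable configurations (for instance when $\overline{G}[A]$ contains a large induced matching on which one places equal weight) a direct ``cocktail-party'' counting argument gives the factor $2$ with $\delta$ supported on $A$ alone. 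The difficult configurations are precisely those with no such local structure: e.g.\ $|A|=2$ with $A$ a non-edge, a claw centred at $v$, or an induced $\overline{P_3}$ inside $A$. Here one uses twin-freeness: for every relevant non-edge of $\overline{G}[A]$ it forces a vertex at distance at most $2$ from $v$ breaking the symmetry, and the new $P_3$'s through that vertex let us construct a balanced $\delta$ on $N_2[v]$ with $2\,\OPT(G,\delta)\ge\delta(V(G))$.

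\emph{Main obstacle.} The entire difficulty — and the reason previous work stalled at $9/4$ — lies in this last step: the case analysis on the local structure of $\overline{G}[N(v)]$ and on how distance-$2$ vertices attach must be carried out until the constant is \emph{exactly} $2$ in every configuration, with the true-twin contraction precisely removing the instances in which no good $\delta$ on $N_2[v]$ can exist. (The polyhedral upper and lower bounds announced in the abstract are proved by separate arguments and are not needed for Theorem~\ref{thm:2apx}.)
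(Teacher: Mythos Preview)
Your local-ratio framework and the twin-contraction step match the paper's Algorithm~\ref{algo} closely, but the key target you set for the local cost function is too strong, and this is a genuine gap rather than just a ``hard case analysis to be filled in.''

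You ask for a nonzero $\delta$ supported in $N_{\le 2}[v]$ with
\[
\delta(V(G)) \le 2\,\OPT(G,\delta),
\]
i.e.\ what the paper calls \emph{strongly $2$-good}. But take $G=P_3$ itself: it is twin-free, has no clique component, and the middle vertex has a non-clique neighbourhood. Here $\OPT(G,\delta)=\min(\delta_1,\delta_2,\delta_3)$ while $\delta(V)=\delta_1+\delta_2+\delta_3$, and $\delta_1+\delta_2+\delta_3\le 2\min_i\delta_i$ forces $\delta=0$. So no strongly $2$-good $\delta$ exists, and your local-ratio step cannot fire. The same obstruction appears whenever $H=G[N_{\le 2}[v_0]]$ is a ``star-like'' graph with a unique cheap hitting vertex; these are exactly the configurations the paper singles out.

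The paper's remedy is twofold. First, it relaxes the target to \emph{centrally $2$-good}: one only needs $c_H(V(H))\le 2\,\OPT(H,c_H)+1$ together with $c_H\ge 1$ on all of $N[v_0]$. Second, the algorithm is required to return a \emph{minimal} hitting set $X$; minimality guarantees $X\not\supseteq N[v_0]$, so $c_H(X\cap V(H))\le c_H(V(H))-1\le 2\,\OPT(H,c_H)$, which is what the local-ratio bookkeeping actually needs. Your recursion neither maintains minimality (you unconditionally add zero-cost vertices to the solution) nor uses it, so even if you weakened your target you would still be missing the ingredient that buys back the ``$+1$''. The structural work in the paper (chordal, $2P_3$-free neighbourhoods; the hitting-clique lemma; the twin-reduction of Lemma~\ref{lem:get_rid_of_true_twins}) is all aimed at producing a centrally $2$-good $c_H$, not a strongly $2$-good one.
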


All previous approximation algorithms for \cVD{} are based on the local ratio technique. See the survey of Bar-Yehuda, Bendel, Freund, and Rawitz \cite{bbfr2004} for background on this standard algorithmic technique. Our algorithm is no exception, see Algorithm~\ref{algo} below. 
However, it significantly differs from previous algorithms in its crucial step (see Step~\ref{step:find_good_H} below). In fact, almost all our efforts in this paper focus on that particular step of the algorithm (see Theorem~\ref{thm:localratio}), which searches for a special type of induced subgraph  of $G$, which we now describe.

Let $H$ be an induced subgraph of $G$, and let $c_H: V(H) \to \Q_{\geqslant 0}$. The weighted graph $(H,c_H)$ is said to be \emph{$\alpha$-good in $G$} (for some factor $\alpha \geqslant 1$) if $c_H$ is not identically $0$ and $c_H(X \cap V(H)) \leqslant \alpha \cdot \OPT(H, c_H)$ holds for every (inclusionwise) minimal hitting set $X$ of $G$. We overload terminology and say that an induced subgraph $H$ is \emph{$\alpha$-good in $G$} if there exists a cost function $c_{H}$ such that $(H,c_{H})$ is $\alpha$-good in $G$. We stress that the local cost function $c_H$ is defined obliviously of the global cost function $c : V(G) \to \Q_{\geqslant 0}$.

A pair of vertices $u, u'$ of $G$ are called \emph{twins}\footnote{We warn the reader that, in other papers, \emph{twins} are usually called \emph{true twins}, whereas two vertices which have the same set of neighbours are called \emph{false twins}.  Since we have no need of false twins in this paper, we have chosen to use \emph{twins} in place of \emph{true twins}.} if $uu' \in E(G)$, and for all $v \in V(G-u-u')$, $uv \in E(G)$ if and only if $u'v \in E(G)$.  We say that $G$ is \emph{twin-free} if $G$ has no twins.  As in~\cite{fiorini2016improved,FJS20}, if $G$ has a pair of twins $u,u'$, then \cVD{} admits an easy reduction step (see Steps~\ref{step:true_twins_start}--\ref{step:true_twins_end}).  The idea is simply to add the cost of $u'$ to that of $u$ and delete $u'$. This works since $u'$ belongs to a minimal hitting set of $G$ if and only if $u$ does (see \cite{fiorini2016improved} for a complete proof).  Therefore, when searching for $\alpha$-good induced subgraphs, we may assume that $G$ is twin-free, which is crucial for our proofs.

\begin{algorithm}\caption{$\cVDapx(G,c)$}\label{algo} 
\begin{algorithmic}[1]
\REQUIRE $(G,c)$ a weighted graph
\ENSURE $X$ a minimal hitting set of $G$
\IF{$G$ is a cluster graph}
	\STATE{$X \leftarrow \varnothing$}
\ELSIF{there exists $u \in V(G)$ with $c(u) = 0$}
	\STATE{$G' \leftarrow G - u$}
	\STATE{$c'(v) \leftarrow c(v)$ for $v \in V(G')$}
	\STATE{$X' \leftarrow \cVDapx(G',c')$ \label{step:zero_cost}}
	\STATE{$X \leftarrow X'$ if $X'$ is a hitting set of $G$; $X \leftarrow X' \cup \{u\}$ otherwise}
\ELSIF{there exist twins $u, u' \in V(G)$ \label{step:true_twins_start}}
	\STATE{$G' \leftarrow G - u'$}
	\STATE{$c'(u) \leftarrow c(u) + c(u')$; $c'(v) \leftarrow c(v)$ for $v \in V(G'-u)$}
	\STATE{$X' \leftarrow \cVDapx(G',c')$ \label{step:true_twins}}
	\STATE{$X \leftarrow X'$ if $X'$ does not contain $u$; $X \leftarrow X' \cup \{u'\}$ otherwise \label{step:true_twins_end}}
\ELSE
	\STATE{find a weighted induced subgraph $(H,c_H)$ that is $2$-good in $G$ \label{step:find_good_H}}
	\STATE{$\lambda^* \leftarrow \max \{\lambda \mid \forall v \in V(H) : c(v) - \lambda c_H(v) \geqslant 0\}$} \label{step:lambda_star}
	\STATE{$G' \leftarrow G$}
	\STATE{$c'(v) \leftarrow c(v) - \lambda^* c_H(v)$ for $v \in V(H)$; $c'(v) \leftarrow c(v)$ for $v \in V(G) \setminus V(H)$}\label{step:newcost}
	\STATE{$X \leftarrow \cVDapx(G',c')$ \label{step:increase_dual}}
\ENDIF
\STATE{return $X$}
\end{algorithmic}
\end{algorithm}

We will use two methods to establish $\alpha$-goodness of induced subgraphs. We say that $(H,c_H)$ is \emph{strongly} $\alpha$-good if $c_H$ is not identically $0$ and $c_H(V(H)) \leqslant \alpha \cdot \OPT(H,c_H)$. Clearly, if  $(H,c_H)$ is strongly $\alpha$-good then $(H, c_H)$ is $\alpha$-good in $G$, for every graph $G$ which contains $H$.  
We say that $H$ itself is \emph{strongly $\alpha$-good} if $(H,c_H)$ is strongly $\alpha$-good for some cost function $c_H$.

Let $N_{\leqslant i}[v]$ (resp.\ $N_i(v)$) be the set of vertices at distance at most (resp.\ equal to) $i$ from vertex $v$.  We abbreviate $N(v):=N_1(v)$ and $N[v]:=N_{\leqslant 1}[v]$.  
If we cannot find a strongly $\alpha$-good induced subgraph in $G$, we will find an induced subgraph $H$ that has a special vertex $v_0$ such that $N[v_0]$ is entirely contained in $H$, and a cost function $c_H : V(H) \to \Z_{\geqslant 0}$ such that $c_H(v) \geqslant 1$ for all vertices $v \in N[v_0]$ and $c_H(V(H)) \leqslant \alpha \cdot \OPT(H,c_H) + 1$. Notice that no minimal hitting set $X$ can contain all the vertices of $N[v_0]$, since if $X$ contains $N(v_0)$, then $v_0$ is an isolated clique. Hence, $c_H(X \cap V(H)) \leqslant c_H(V(H)) - 1 \leqslant \alpha \cdot \OPT(H,c_H)$ and so $(H,c_H)$ is $\alpha$-good in $G$. We say that $(H,c_H)$ (sometimes simply $H$) is \emph{centrally} $\alpha$-good (in $G$) \emph{with respect to $v_0$}. Moreover, we call $v_0$ the \emph{root vertex}.

In order to illustrate these ideas, consider the following two examples (see Figure \ref{fig:C4_P3}). First, let $H$ be a $C_4$ (that is, a $4$-cycle) contained in $G$ and $\mathbf{1}_H$ denote the unit cost function on $V(H)$. Then $(H,\mathbf{1}_H)$ is strongly $2$-good, since $\sum_{v \in V(H)} \mathbf{1}_H(v) = 4 = 2 \OPT(H,\mathbf{1}_H)$. Second, let $H$ be a $P_3$ contained in $G$, starting at a vertex $v_0$ that has degree-$1$ in $G$. Then $(H,\mathbf{1}_H)$ is centrally $2$-good with respect to $v_0$, but it is not strongly $2$-good.

\begin{figure}[t]\centering 
 \begin{minipage}[t]{.42\textwidth}\centering
    \begin{tikzpicture}[scale=.3,inner sep=2pt]
\tikzstyle{vtx}=[circle,draw,thick,fill=white]
\draw[thick] (0,0)--(4,0)--(4,4)--(0,4)--cycle;
\draw (0,0) node[vtx]{\tiny $1$}
	(4,0) node[vtx]{\tiny $1$}
	(4,4) node[vtx]{\tiny $1$}
	(0,4) node[vtx]{\tiny $1$};
\end{tikzpicture}
\label{fig:C4}
\end{minipage}
\qquad
\begin{minipage}[t]{.42\textwidth}\centering
 \begin{tikzpicture}[scale=.3,inner sep=2pt]
\tikzstyle{vtx}=[circle,draw,thick,fill=white]
\draw[thick] (-4,0)--(0,0)--(4,0);
\draw (0,0) node[vtx]{\tiny $1$}
	(-4,0) node[vtx,fill=black!25]{\tiny $1$}
	(4,0) node[vtx]{\tiny $1$};
\draw[thick] (4,0) ellipse (6 and 4);
\end{tikzpicture}
 \end{minipage}  
\caption{
$(C_4,\mathbf{1}_{C_4})$ on the left is strongly $2$-good. $(P_3,\mathbf{1}_{P_3})$, on the right, is centrally $2$-good in $G$ with respect to the gray vertex, which has degree 1 in $G$.} \label{fig:C4_P3}
\end{figure}
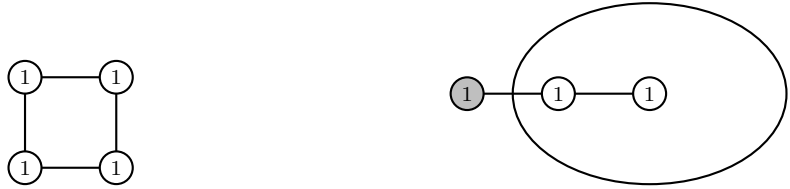

Each time we find a $2$-good weighted induced subgraph $H$ in $G$, the local ratio technique allows us to recurse on an induced subgraph $G'$ of $G$ in which at least one vertex of $H$ is deleted from $G$. For example, the $2$-good induced subgraphs mentioned above allow us to reduce to input graphs $G$ that are $C_4$-free and have minimum degree at least $2$.

The crux of our algorithm, Step~\ref{step:find_good_H},
relies on the following structural result. 

\begin{theorem} \label{thm:localratio} 
Let $G$ be a twin-free graph, let $v_0$ be any vertex of $G$, and let $H$ be the subgraph of $G$ induced by $N_{\leqslant 2}[v_0]$. There exists a cost function $c_H : V(H) \to \Z_{\geqslant 0}$ such that $(H,c_H)$ is either strongly $2$-good, or centrally $2$-good in $G$ with respect to $v_0$. Moreover, $c_H$ can be constructed in polynomial time.
\end{theorem}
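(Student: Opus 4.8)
\smallskip
\noindent\emph{Proof plan.} Write $A := N(v_0)$ and $B := N_2(v_0)$, so that $V(H) = \{v_0\}\cup A\cup B$ and $v_0$ is complete to $A$. The plan is to branch on whether $H$ contains an induced $C_4$. A reduction I will use throughout: setting $S := \supp(c_H)$, one has $\OPT(H,c_H) = \OPT(G[S], c_H|_S)$, since the zero-cost vertices of $H$ may be added to any hitting set for free. Hence it is enough to produce an induced subgraph $G[S]$ of $H$ carrying a cost function that is either strongly $2$-good, or --- when $N[v_0]\subseteq S$ and $c_H\geqslant 1$ on $N[v_0]$ --- satisfies $c_H(S)\leqslant 2\,\OPT(G[S],c_H|_S)+1$ (so that $(H,c_H)$ is centrally $2$-good with respect to $v_0$, because no minimal hitting set of $G$ contains all of $N[v_0]$).

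If $H$ contains an induced $C_4$, let $c_H$ be $1$ on its four vertices and $0$ elsewhere; since $\OPT(C_4,\mathbf{1}) = 2 = \tfrac12|V(C_4)|$, this is strongly $2$-good. I would also settle the degenerate possibilities $\deg_G(v_0)\leqslant 1$ here: if $v_0$ is isolated then $\mathbf{1}_{\{v_0\}}$ is centrally $2$-good with respect to $v_0$, and if $\deg_G(v_0)=1$ then twin-freeness forces an induced $P_3$ with $v_0$ as an endpoint, which is centrally $2$-good with respect to $v_0$ (the second example in the excerpt). So from now on $H$ is $C_4$-free and $\deg_G(v_0)\geqslant 2$. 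The single structural fact driving the rest of the argument is then: for every $b\in B$ the set $N(b)\cap A$ is a clique of $G$ --- otherwise two non-adjacent vertices $a,a'\in N(b)\cap A$ would give an induced $C_4$ on $\{v_0,a,b,a'\}$. This is the precise sense in which taking the distance-$2$ neighbourhood of $v_0$ tames how $N_2(v_0)$ attaches to $N(v_0)$.

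In the remaining case I would aim for a centrally $2$-good cost function of the special shape
\[
  c_H(v_0) = z,\qquad c_H\equiv 1\text{ on }A,\qquad c_H\equiv 1\text{ on a chosen }B'\subseteq B,\qquad c_H\equiv 0\text{ on }B\setminus B',
\]
for an integer $z\geqslant 1$ and a set $B'$; then $S=\{v_0\}\cup A\cup B'\supseteq N[v_0]$, so it remains to prove $z+|A|+|B'|\leqslant 2\,\OPT(G[S],c_H|_S)+1$. To bound the right-hand side from below I would split a hitting set $X$ of $G[S]$ according to whether $v_0\in X$: if $v_0\in X$ it contributes $z$, and $X\setminus\{v_0\}$ must still hit every $P_3$ of $G[(A\cup B')\setminus\{v_0\}]$ --- in particular one $P_3$ through each $a\in A$ retaining a neighbour in $B'$; if $v_0\notin X$ then $A\setminus X$ must be a clique of $G$, so $|X\cap A|\geqslant |A|-\omega(G[A])$ (with $\omega$ the clique number), and moreover for each surviving $a\in A$ with a neighbour in $B'$ that neighbour is forced into $X$. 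Everything hinges on choosing $B'$ and $z$ so that both of these bounds reach $2\,\OPT+1$.

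The choice of $B'$ and $z$ is dictated by the structure of $G[A]$, and this is where I expect the main work to lie. When $G[A]$ has no clique on more than $|A|/2$ vertices and is far from a cluster graph, one can take $z=1$ and $B'=\varnothing$: the elementary identity $\OPT(G[N[v_0]],\mathbf{1}) = \min\bigl(1+\OPT(G[A],\mathbf{1}),\,|A|-\omega(G[A])\bigr)$ already gives what is needed. The hard cases are exactly those where this fails --- $G[A]$ cluster-like, or with a clique covering more than half of $A$ --- because then deleting $v_0$ (or completing $A$ to a clique) is cheap. There I would use twin-freeness to extract, for each vertex $a$ of $A$ (grouped via a suitable clique partition of $G[A]$), a ``private'' neighbour $b_a\in B$ which, by the structural fact above, meets $A$ in only a single vertex or a small clique; take $B'$ to be a system of distinct representatives of the $b_a$ (their distinctness again following from the structure of $B$), and tune $z$ to be exactly the shortfall of the ``delete $v_0$'' solution. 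Two extreme instances are instructive: when $G[A]$ is a clique, $H$ contains $K_{|A|+1}$ with a pendant added at each vertex other than $v_0$, and unit costs already work since this graph on $2|A|+1$ vertices has $\OPT = |A|$; when $G[A]$ is an induced matching, one is forced to set $z = |A|/2-1$. The real obstacle is to organise the case analysis on $G[A]$ so that such private-neighbour systems always exist --- controlling overlaps when many vertices of $A$ share their only neighbour in $B$, and the borderline sub-cases where the natural value of $z$ would drop to $0$ --- and so that the combinatorial lower bound on $\OPT$ goes through uniformly; polynomial-time constructibility then comes for free, since in each hard case the structure of $G[A]$ is explicit.
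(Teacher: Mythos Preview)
Your proposed cost function shape --- $c_H(v_0)=z$, $c_H\equiv 1$ on $A=N(v_0)$, $c_H\equiv 1$ on a chosen $B'\subseteq N_2(v_0)$, zero elsewhere --- is too rigid, and this is a genuine gap rather than a missing detail. Take $H$ with $v_0$ universal, $A=\{v_1,\dots,v_7\}$, where $\{v_1,v_2,v_3,v_4\}$ induces a $K_4$ and $v_5,v_6,v_7$ are pendants attached at $v_2,v_1,v_3$ respectively (this is exactly the graph in the paper's Figure~3, and it is twin-free and $C_4$-free with $B=\varnothing$). Here $\omega(G[A])=4$ and $\OPT(G[A],\mathbf{1})=3$, so with your cost $c_H$ one gets $\OPT(H,c_H)=\min(z+3,\,7-4)=3$ for every $z\geqslant 1$, while $c_H(V(H))=z+7\geqslant 8>2\cdot 3+1$. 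No choice of $z\geqslant 1$ makes this centrally $2$-good, and since $B=\varnothing$ your $B'$ degree of freedom is unavailable. The paper handles this instance with the \emph{non-uniform} cost $(6,1,1,1,1,3,3,3)$ on $(v_0,v_1,\dots,v_7)$, which is tight for central $2$-goodness. So constant weights on $A$ are provably insufficient; the heart of the matter is deciding \emph{which} vertices of $A$ deserve higher weight.

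The paper's route diverges from yours at almost every branch point. First, it does not merely test for an induced $C_4$ in $H$: it tests whether $G[N(v_0)]$ is chordal and $2P_3$-free, using a wheel (Lemma~\ref{lem:wheel}) or a $2P_3$ plus apex (Lemma~\ref{lem:2p3free}) as strongly $2$-good witnesses when it is not. This stronger reduction is what makes the base case tractable. Second, the paper peels off $N_2(v_0)$ one vertex at a time (Lemma~\ref{lem:get_rid_of_true_twins}), killing pairs of true twins as they appear and propagating costs accordingly, so that the base case is a \emph{twin-free} graph with $v_0$ universal. Third --- and this is the idea your plan is missing --- in that base case the paper finds a hitting \emph{clique} $K_0$ in the chordal, $2P_3$-free graph $G[A]$ (Lemma~\ref{lem:chordal+2p3free}), builds for each $v\in K_0$ a stable set $S_v\ni v$ so that any two of the $S_v$'s jointly contain a $P_3$, and sets $c_{H'}:=\sum_{v\in K_0}\chi^{S_v}$ on $A$. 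This produces genuinely non-uniform weights on $A$ and is what makes both branches of $\OPT(H,c_H)=\min\bigl(c_H(v_0)+\OPT(G[A],c_{H'}),\,c_{H'}(A)-\omega(G[A],c_{H'})\bigr)$ line up. Your ``private neighbour in $B$'' idea is loosely in the spirit of the paper's staircase argument used to build the $S_v$, but in the paper this happens \emph{inside} $A$ (between $K_0$ and the clusters of $G[A]-K_0$), not between $A$ and $B$; without the chordal, $2P_3$-free reduction and the hitting clique, there is no natural index set to hang such a construction on.
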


We also study \cVD~from the polyhedral point of view. In particular we investigate how well linear programming (LP) relaxations can approximate the optimal value of \cVD. As in~\cite{chan2016approximate,BPZ,bazzi2019no}, we use the following notion of LP relaxations which, by design, allows for extended formulations. 

Fix a graph $G$. Let $d \in \Z_{\geqslant 0}$ be an arbitrary dimension. A system of linear inequalities $Ax \geqslant b$ in $\R^d$ defines an \emph{LP relaxation} of \cVD{} on $G$ if the following hold:
(i) For every hitting set $X \subseteq V(G)$, we have a point $\pi^X \in \R^d$ satisfying $A\pi^X \geqslant b$; (ii) For every cost function $c : V(G) \to \Q_{\geqslant 0}$, we have an affine function $f_c : \R^d \to \R$; (iii) For all hitting sets $X \subseteq V(G)$ and cost functions $c : V(G) \to \Q_{\geqslant 0}$, the condition $f_c(\pi^X) = c(X)$ holds. 

The \emph{size} of the LP relaxation $Ax \geqslant b$ is defined as the number of rows of $A$. 
For every cost function $c$, the quantity $\LP(G,c) := \min \{f_c(x) \mid Ax \geqslant b\}$ gives a lower bound on $\OPT(G,c)$. The \emph{integrality gap} of the LP relaxation $Ax \geqslant b$ is defined as $\sup \{ \OPT(G,c) / \LP(G,c) \mid c \in \Q_{\geqslant 0}^{V(G)}\}$.

Letting $\mathcal{P}_3(G)$ denote the collection of all vertex sets $\{u,v,w\}$ that induce a $P_3$ in $G$, we define $$P(G) := \{ x \in [0,1]^{V(G)} \mid \forall \{u,v,w\} \in \mathcal{P}_3(G) : x_u + x_v + x_w \geqslant 1 \}.$$
We let $\SA_r(G)$ denote the relaxation obtained from $P(G)$ by applying $r$ rounds of the \emph{Sherali-Adams hierarchy}~\cite{SA1990}, a standard procedure to derive strengthened LP relaxations of binary linear programming problems. If a cost function $c : V(G) \to \Q_{\geqslant 0}$ is provided, we let $$\SA_r(G,c) := \min \{\sum_{v \in V(G)} c(v) x_v \mid x \in \SA_r(G)\}$$
denote the optimum value of the corresponding linear programming relaxation. 

It is not hard to see that the straightforward LP relaxation $P(G)$ has worst case integrality gap equal to $3$ (by \emph{worst case}, we mean that we take the supremum over all graphs $G$). Indeed, for a random $n$-vertex graph, $\OPT(G,\mathbf{1}_G) = n - O(\log^2 n)$ with high probability.  This can be easily proved via the probabilistic method.  A similar proof can be found, for instance, in the introduction of \cite{alonspencer2016}).  On the other hand, $\LP(G,\mathbf{1}_G) \leqslant n/3$, since the vector with all coordinates equal to $\frac{1}{3}$ is feasible for $P(G)$.

On the positive side, we show how applying one round of the Sherali-Adams hierarchy gives a relaxation with integrality gap at most $5/2 = 2.5$, see Theorem~\ref{thm:SA1CVD_UB}. To complement this, we prove that the worst case integrality gap of the relaxation is precisely $5/2$, see Theorem~\ref{thm:SA1CVD_LB}. We then show that the integrality gap decreases to $2+\eps$ after applying $\mathrm{poly}(1/\eps)$ rounds, see Theorem~\ref{thm:SA_2+eps}. 




On the negative side, applying known results on \VC~\cite{bazzi2019no}, we show that no polynomial-size LP relaxation of \cVD{} can have integrality gap at most $2 - \eps$ for some $\eps > 0$. As is the case for similar lower bounds (see \cite{rothvoss13, aprile2017extension}), this result is unconditional: it does not rely on P $\neq$ NP nor the Unique Games Conjecture. 


\subsection{Comparison to previous works} 

We now revisit all previous approximation algorithms for \cVD~\cite{YWC17,fiorini2016improved, FJS20}.
The presentation given here slightly departs from~\cite{YWC17,fiorini2016improved}, and explains in a unified manner what is the bottleneck in each of the algorithms.

Fix $k \in \{3,4,5\}$, and let $\alpha := (2k-1)/(k-1)$. Notice that $\alpha = 5/2$ if $k = 3$, $\alpha = 7/3$ if $k = 4$ and $\alpha = 9/4$ if $k = 5$. In~\cite[Lemma 3]{FJS20},
it is shown that if a twin-free graph $G$ contains a $k$-clique, then one can find an induced subgraph $H$ containing the $k$-clique and a cost function $c_H$ such that $(H,c_H)$ is strongly $\alpha$-good.

Therefore, in order to derive an $\alpha$-approximation for \cVD, one may assume without loss of generality that the input graph $G$ is twin-free and has no $k$-clique. Let $v_0$ be a maximum degree vertex in $G$, and let $H$ denote the subgraph of $G$ induced by $N_{\leqslant 2}[v_0]$. In~\cite{FJS20}, it is shown by a tedious case analysis that one can construct a cost function $c_H$ such that $(H,c_H)$ is $2$-good in $G$, using the fact that $G$ has no $k$-clique. 

The simplest case occurs when $k = 3$. Then $N(v_0)$ is a stable set. Letting $c_H(v_0) := d(v_0)-1$, $c_H(v) := 1$ for $v \in N(v_0)$ and $c_H(v) := 0$ for the other vertices of $H$, one easily sees that $(H,c_H)$ is (centrally) $2$-good in $G$ . For higher values of $k$, one has to work harder. 

In this paper, we show that one can \emph{always}, and in polynomial time, construct a cost function $c_H$ on the vertices at distance at most $2$ from $v_0$ that makes $(H,c_H)$ $2$-good in $G$, provided that $G$ is twin-free, see Theorem~\ref{thm:localratio}. This result was the main missing ingredient in previous approaches, and single-handedly closes the approximability status of \cVD.

\subsection{Other related works} \label{sec:otherrelated}

\cVD{} has also been widely studied from the perspective of \emph{fixed parameter tractability}. Given a graph $G$ and parameter $k$ as input, the task is to decide if $G$ has a hitting set $X$ of size at most $k$. A $2^k n^{\mathcal{O}(1)}$-time algorithm for this problem was given by H\"{u}ffner, Komusiewicz, Moser, and Niedermeier~\cite{HKMN10}. This was subsequently improved to a $1.911^k n^{\mathcal{O}(1)}$-time algorithm by Boral, Cygan, Kociumaka, and Pilipczuk~\cite{BCKP16}, and a $1.811^k n^{\mathcal{O}(1)}$-time algorithm by Tsur~\cite{Tsur19}.  By the general framework of Fomin, Gaspers, Lokshtanov, and Saurabh~\cite{FGLS19}, these parametrized algorithms can be transformed into exponential algorithms which compute the size of a minimum hitting set for $G$ exactly, the fastest of which runs in time $\mathcal{O}(1.488^n)$.

For polyhedral results, Hosseinian and Butenko~\cite{hosseinian2021polyhedral} gives some facet-defining inequalities of the \cVD{} polytope, as well as complete linear descriptions for special classes of graphs.

Another related problem is the \emph{feedback vertex set} problem in tournaments (\FVST).  Given a tournament $T$ with costs on the vertices, the task is to find a minimum cost set of vertices $X$ such that $T-X$ does not contain a directed cycle. 

For unit costs, note that \cVD{} is equivalent to the problem of deleting as few elements as possible from a \emph{symmetric} relation to obtain a transitive relation, while \FVST{} is equivalent to the problem of deleting as few elements as possible from an \emph{antisymmetric} and complete relation to obtain a transitive relation.

In a tournament, hitting all directed cycles is equivalent to hitting all directed triangles, so \FVST{} is also a hitting set problem in a $3$-uniform hypergraph. Moreover, \FVST{} is also UGC-hard to approximate to a constant factor smaller than $2$.  Cai, Deng, and Zang~\cite{CDZ01} gave a $5/2$-approximation algorithm for \FVST, which was later improved to a $7/3$-approximation algorithm by Mnich, Williams, and V\'{e}gh~\cite{MWV16}.  Lokshtanov, Misra, Mukherjee, Panolan, Philip, and Saurabh~\cite{LMMPPS20} recently gave a \emph{randomized} $2$-approximation algorithm, but no deterministic (polynomial-time) $2$-approximation algorithm is known. For \FVST{}, one round of the Sherali-Adams hierarchy actually provides a $7/3$-approximation~\cite{aprile2020simple}. 

Among other related covering and packing problems, Fomin, Le, Lokshtanov, Saurabh, Thomass\'e, and Zehavi~\cite{FominLLSTZ19} studied both \cVD{} and \FVST{} from the \emph{kernelization} perspective.  They proved that the unweighted versions of both problems admit subquadratic kernels: $\mathcal{O}(k^{\frac{5}{3}})$ for \cVD{} and $\mathcal{O}(k^{\frac{3}{2}})$ for $\FVST$.

\subsection{Overview of the proof} We give a sketch of the proof of Theorem~\ref{thm:localratio}. Recall that $H=G[N_{\leqslant 2}[v_0]]$. 
If the subgraph induced by $N(v_0)$ contains a hole (that is, an induced cycle of length at least $4$), then $H$ is strongly $2$-good by Lemma~\ref{lem:wheel}. 
If the subgraph induced by $N(v_0)$ contains an induced $2P_3$ (that is, two disjoint copies of $P_3$ with no edges between each other), then $H$ is strongly $2$-good by Lemma~\ref{lem:2p3free}. 
This allows us to reduce to the case where the subgraph induced by $N(v_0)$ is chordal and $2P_3$-free. 

Lemma~\ref{lem:hittingclique2good} then gives a direct construction of a cost function $c_H$ which certifies that $(H,c_H)$ is centrally $2$-good, provided that the subgraph induced by $N[v_0]$ is twin-free. This is the crucial step of the proof. It serves as the base case of the induction. Here, we use a slick observation due to Lokshtanov~\cite{DanielPC}: since the subgraph induced by $N(v_0)$ is chordal and $2P_3$-free, it has a hitting set that is a clique. In a previous version, our proof of Theorem~\ref{thm:localratio} was slightly more complicated.

We show inductively that we can reduce to the case where the subgraph induced by $N[v_0]$ is twin-free. The idea is to delete vertices from $H$ to obtain a smaller graph $H'$, while preserving certain properties, and then compute a suitable cost function $c_H$ for $H$, given a suitable cost function $c_{H'}$ for $H'$. We delete vertices at distance $2$ from $v_0$. When this creates twins in $H$, we delete one vertex from each pair of twins. At the end, we obtain a twin-free induced subgraph of $H[N[v_0]]$, which corresponds to our base case.

We conclude the introduction with a brief description of the different sections of the paper.  Section~\ref{sec:finding} is entirely devoted to the proof of Theorem~\ref{thm:localratio}. The proof of Theorem~\ref{thm:2apx} is given in Section~\ref{sec:analysis}, together with a complexity analysis of Algorithm~\ref{algo}. Section~\ref{sec:polyhedral} presents our polyhedral results. A conclusion is given in Section~\ref{sec:conclusion}. There, we state a few open problems for future research.

\smallskip

\noindent {\bf Conference version.} An abstract of this paper appeared in the proceedings of IPCO 2021~\cite{aprile2021tight}. The current paper is an extended version with full proofs and detailed discussions. In particular, the running time analysis of our algorithm in Section~\ref{sec:analysis} is new, and the polyhedral results of Section \ref{sec:polyhedral} appear without proof in the conference version.


\section{Finding $2$-good induced subgraphs} \label{sec:finding}

The goal of this section is to prove Theorem~\ref{thm:localratio}. Our proof is by induction on the number of vertices in $H := G[N_{\leq 2}[v_0]]$. First, we quickly show that we can assume that the subgraph induced by $N(v_0)$ is chordal and $2P_3$-free. Using this, we prove the theorem in the particular case where the subgraph induced by $N[v_0]$ is twin-free. Finally, we prove the theorem in the general case by showing how to deal with twins.

\subsection{Restricting to chordal, $2P_3$-free neighborhoods} 

A vertex of $G$ is  \emph{apex} if it is adjacent to all the other vertices of $G$. A \emph{wheel} is a graph obtained from a cycle by adding an apex vertex (called the \emph{center}).

As pointed out earlier in the introduction, $4$-cycles are strongly $2$-good. This implies that the wheel on $5$ vertices is strongly $2$-good (putting a zero cost on the center). We now show that \emph{all} wheels on at least $5$ vertices are strongly $2$-good. This allows our algorithm to restrict to input graphs such that the subgraph induced on each neighborhood is chordal. In a similar way, we show that we can further restrict such neighborhoods to be $2P_3$-free.

\begin{lemma} \label{lem:wheel}
Let $H := W_k$ be a wheel on $k \geqslant 5$ vertices and center $v_0$, let $c_H(v_0) := k-5$ and $c_H(v) := 1$ for $v \in V(H-v_0)$. Then $(H,c_H)$ is strongly $2$-good.
\end{lemma}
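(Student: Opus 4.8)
The plan is to show that the unit-cost-plus-apex weighting makes the total cost at most twice the optimum hitting set value of $H = W_k$. First I would compute $\OPT(H, c_H)$ directly. Since the rim of $W_k$ is a cycle $C_{k-1}$ on $k-1$ vertices, any hitting set of $H$ must destroy all induced $P_3$'s; I would argue that an optimal hitting set either (a) contains the center $v_0$, in which case it must additionally hit all $P_3$'s of the rim cycle $C_{k-1}$, costing $(k-5) + \lceil (k-1)/3 \rceil$ with unit costs on the rim... wait, more carefully: once $v_0$ is removed the remaining graph is $C_{k-1}$, whose minimum hitting set (a $P_3$-cover of a cycle, equivalently a set meeting every path on 3 vertices) has size $\lceil (k-1)/3 \rceil$; or (b) does not contain $v_0$, in which case every two adjacent rim vertices together with $v_0$ form a $P_3$, so the surviving rim vertices form a stable set in $C_{k-1}$, meaning at least $(k-1) - \lfloor (k-1)/2 \rfloor = \lceil (k-1)/2 \rceil$ rim vertices are deleted. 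Comparing these two options, for $k \geqslant 5$ the cheaper one should be (a), giving $\OPT(H,c_H) = (k-5) + \lceil (k-1)/3 \rceil$; I would double-check the small cases $k = 5, 6, 7$ by hand and then handle $k \geqslant 8$ uniformly.

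Next I would bound $c_H(V(H)) = (k-5) + (k-1) = 2k - 6$. It then suffices to verify $2k - 6 \leqslant 2\big((k-5) + \lceil (k-1)/3 \rceil\big)$, i.e. $2k - 6 \leqslant 2k - 10 + 2\lceil (k-1)/3 \rceil$, i.e. $\lceil (k-1)/3 \rceil \geqslant 2$, which holds precisely when $k - 1 \geqslant 4$, that is $k \geqslant 5$. So the inequality is tight-ish and works exactly in the stated range. I would also confirm $c_H$ is not identically zero: for $k \geqslant 6$ the rim already has positive cost, and for $k = 5$ the rim vertices have cost $1$ while the center has cost $0$, so $c_H \not\equiv 0$ in all cases.

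The main obstacle I anticipate is establishing the exact value of $\OPT(H, c_H)$ — specifically, proving the lower bound that no hitting set can do better than option (a). The subtlety is that a hitting set need not fall cleanly into "contains $v_0$" versus "avoids $v_0$ and leaves a rim stable set"; I need the clean dichotomy that \emph{if} $v_0 \notin X$ then $X$ restricted to the rim hits every edge of $C_{k-1}$ (a vertex cover of the cycle, size $\geqslant \lceil (k-1)/2 \rceil$), and \emph{if} $v_0 \in X$ then $X$ restricted to the rim must hit every $P_3$ of $C_{k-1}$ (size $\geqslant \lceil (k-1)/3 \rceil$). Then $\OPT(H,c_H) = \min\{ (k-5) + \lceil(k-1)/3\rceil,\ \lceil(k-1)/2\rceil \}$ and I must check the first term is the minimum for all $k \geqslant 5$; since $(k-5) + \lceil(k-1)/3\rceil$ versus $\lceil(k-1)/2\rceil$ differ by roughly $k - 5 - (k-1)/6 > 0$ for large $k$, and the small cases check out, this is routine once set up. Actually, for the $2$-goodness bound I only need $\OPT(H,c_H) \geqslant (k-5) + \lceil(k-1)/3\rceil$ would be false if option (b) is cheaper — so I really need the value, or at least need $\OPT \geqslant k - 3$, which I can get from either branch: option (a) gives $(k-5) + 2 = k-3$ whenever $\lceil(k-1)/3\rceil \geqslant 2$, and option (b) gives $\lceil(k-1)/2\rceil \geqslant k - 3$ for $k \geqslant 5$ as well. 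So in fact it is enough to prove $\OPT(H, c_H) \geqslant k - 3$, and then $c_H(V(H)) = 2k - 6 = 2(k-3) \leqslant 2\,\OPT(H,c_H)$. I would present it this way, as it sidesteps computing the exact optimum.
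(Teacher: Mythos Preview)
Your final streamlined approach---show $\OPT(H,c_H) \geqslant k-3$ via the dichotomy on $v_0 \in X$ or $v_0 \notin X$, then observe $c_H(V(H)) = 2k-6 = 2(k-3)$---is exactly the paper's proof. However, your case (b) analysis contains a genuine error that breaks the argument for $k \geqslant 7$.

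You write that if $v_0 \notin X$ then ``every two adjacent rim vertices together with $v_0$ form a $P_3$, so the surviving rim vertices form a stable set.'' This is backwards. If $u,w$ are \emph{adjacent} rim vertices, then $\{u,v_0,w\}$ induces a triangle, not a $P_3$; it is when $u,w$ are \emph{non-adjacent} rim vertices that $u\,v_0\,w$ is an induced $P_3$. Hence the surviving rim vertices must be pairwise adjacent, i.e.\ they form a \emph{clique} in $C_{k-1}$, not a stable set. Since the maximum clique in a cycle of length $k-1 \geqslant 4$ has size $2$, at most two rim vertices survive, giving $c_H(X) \geqslant (k-1)-2 = k-3$ in case (b)---which is exactly the bound you need and exactly what the paper uses.

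Your stated bound $\lceil (k-1)/2 \rceil$ (a vertex-cover bound) is both derived from the wrong premise and insufficient: the claim ``$\lceil(k-1)/2\rceil \geqslant k-3$ for $k \geqslant 5$'' fails already at $k=7$ ($3 \not\geqslant 4$) and for all larger $k$. So as written, your case (b) does not establish $\OPT \geqslant k-3$. Fixing the clique/stable-set confusion repairs the proof and makes it coincide with the paper's.
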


\begin{proof}
Notice that $\OPT(H,c_H) \geqslant k-3$ since a hitting set either contains $v_0$ and at least $2$ more vertices, or does not contain $v_0$ but contains $k-3$ other vertices. Hence, $\sum_{v \in V(H)} c_H(v) = k-5 + k-1 = 2(k-3) \leqslant 2 \OPT(H,c_H)$.
\end{proof}

\begin{lemma}\label{lem:2p3free}
Let $H$ be the graph obtained from $2P_3$ by adding an apex vertex $v_0$. Let $c_H(v_0) := 2$ and $c_H(v) := 1$ for $v \in V(H-v_0)$. Then $(H,c_H)$ is strongly $2$-good.
\end{lemma}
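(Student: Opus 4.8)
The plan is to reduce everything to the single inequality $\OPT(H,c_H) \geqslant 4$. Indeed, $c_H$ is visibly not identically $0$, and $c_H(V(H)) = c_H(v_0) + \sum_{v \in V(H - v_0)} c_H(v) = 2 + 6 = 8$, so once we know $\OPT(H,c_H) \geqslant 4$ we get $c_H(V(H)) = 8 \leqslant 2 \cdot \OPT(H,c_H)$, which is exactly the definition of $(H,c_H)$ being strongly $2$-good. So the whole proof is a lower bound on $\OPT(H,c_H)$. I would fix notation: write the two copies of $P_3$ inside $H$ as induced paths $a_1 a_2 a_3$ and $b_1 b_2 b_3$, so that $v_0$ is adjacent to all six vertices $a_1,a_2,a_3,b_1,b_2,b_3$, the only edges among the latter are $a_1a_2, a_2a_3, b_1b_2, b_2b_3$, and there are no edges between an $a_i$ and a $b_j$.

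Next I would take an arbitrary hitting set $X$ of $H$ and show $c_H(X) \geqslant 4$ by a two-case split on whether $v_0 \in X$. If $v_0 \in X$, then $X$ must still hit both of the induced $P_3$'s $a_1a_2a_3$ and $b_1b_2b_3$ (they are induced $P_3$'s of $H$ and $v_0$ is not among their vertices), so $X$ contains at least one $a_i$ and at least one $b_j$; hence $c_H(X) \geqslant c_H(v_0) + 1 + 1 = 4$.

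If $v_0 \notin X$, I would use that $v_0$ is universal in $H$: in $H - X$ the vertex $v_0$ is adjacent to every other surviving vertex, so the connected component of $v_0$ in the cluster graph $H - X$ is all of $H - X$; since a connected cluster graph is a clique, the surviving vertices of the two paths are pairwise adjacent. But in $2P_3$ no vertex of the first path is adjacent to any vertex of the second path, and within a single $P_3$ the endpoints $a_1,a_3$ (resp.\ $b_1,b_3$) are non-adjacent; hence at most two path vertices can survive. Therefore $X$ contains at least $6 - 2 = 4$ of the path vertices, so again $c_H(X) \geqslant 4$. Combining the two cases, $\OPT(H,c_H) \geqslant 4$, which finishes the proof.

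I do not expect any real obstacle here — the statement is an easy base-case lemma. The only point that needs a moment of care is the case $v_0 \notin X$, where one must explicitly invoke universality of $v_0$ to conclude that $H - X$ is a single clique (rather than a disjoint union of several cliques) and then read off the maximum clique size $2$ from the structure of $2P_3$; the rest is a one-line count.
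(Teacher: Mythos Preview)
Your proposal is correct and follows exactly the paper's approach: the paper's proof simply asserts that it is easy to check $\OPT(H,c_H) \geqslant 4$ and then concludes $c_H(V(H)) = 8 \leqslant 2\OPT(H,c_H)$. You have merely filled in the omitted verification of $\OPT(H,c_H) \geqslant 4$ via the natural case split on whether $v_0 \in X$, and your argument in each case is clean and correct.
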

\begin{proof}
It is easy to check that $\OPT(H,c_H) \geqslant 4$. Thus, $\sum_{v \in V(H)} c_H(v) = 8 \leqslant 2 \OPT(H,c_H)$. 
\end{proof}

\subsection{When $H$ is twin-free}

Throughout this section, we assume that $H$ is a twin-free graph with an apex vertex $v_0$ such that $H-v_0$ is chordal and $2P_3$-free. Our goal is to construct a cost function $c_H$ that certifies that $H$ is centrally $2$-good. 

It turns out to be easier to define the cost function on $V(H-v_0)=N(v_0)$ first, and then adjust the cost of $v_0$. This is the purpose of the next lemma. Below, $\omega(G,c)$ denotes the maximum weight of a clique in weighted graph $(G,c)$. 

\begin{lemma} \label{lem:without_v0}
Let $H$ be a graph with an apex vertex $v_0$ and $H':=H-v_0$. Let $c_{H'} : V(H') \rightarrow \Z_{\geqslant 1}$ be a cost function such that
	\begin{enumerate}[(i)]
    \item \label{it:c2} $c_{H'}(V(H')) \geqslant 2\omega(H',c_{H'})$ and
    \item \label{it:c3} $\OPT(H',c_{H'}) \geqslant \omega(H',c_{H'}) - 1$.
	\end{enumerate}
Then we can extend $c_{H'}$ to a function $c_H : V(H) \rightarrow \Z_{\geqslant 1}$ such that $c_H(V(H)) \leqslant 2 \OPT(H,c_H) + 1$. In other words, $(H,c_H)$ is centrally $2$-good with respect to $v_0$.
\end{lemma}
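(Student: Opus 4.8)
The plan is to define $c_H$ by setting $c_H(v) := c_{H'}(v)$ for all $v \in V(H')$ and choosing an appropriate integer value $c_H(v_0) \geqslant 1$ to be determined. Since $v_0$ is universal in $H$, every hitting set of $H$ either contains $v_0$, or avoids $v_0$ and must then hit every $P_3$ of $H - v_0 = H'$ \emph{and} every $P_3$ through $v_0$; the latter $P_3$'s are exactly the paths $u$--$v_0$--$w$ for nonadjacent $u,w \in N(v_0) = V(H')$, so avoiding $v_0$ forces the solution restricted to $V(H')$ to be a hitting set of $H'$ whose complement in $V(H')$ is a clique. Hence $\OPT(H,c_H) = \min\{\, c_H(v_0) + \OPT(H',c_{H'}),\ c_{H'}(V(H')) - \omega(H',c_{H'}) \,\}$: deleting $v_0$ plus an optimal hitting set of $H'$, versus keeping $v_0$ and deleting all of $V(H')$ except a maximum-weight clique. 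Write $W := \omega(H',c_{H'})$, $O := \OPT(H',c_{H'})$, and $S := c_{H'}(V(H'))$ for brevity; our hypotheses are $S \geqslant 2W$ and $O \geqslant W - 1$.

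Next I would pick $c_H(v_0)$ as large as possible subject to the central-goodness bound still being forced by the first branch. We want $c_H(V(H)) = S + c_H(v_0) \leqslant 2\OPT(H,c_H) + 1$. If the minimum defining $\OPT(H,c_H)$ is achieved by the "keep $v_0$" branch, i.e.\ equals $S - W$, then we need $S + c_H(v_0) \leqslant 2(S-W) + 1$, i.e.\ $c_H(v_0) \leqslant S - 2W + 1$; by hypothesis~(i) this right-hand side is at least $1$, so the choice $c_H(v_0) := S - 2W + 1$ is a legitimate value in $\Z_{\geqslant 1}$. With this choice, I would then check the other branch: $c_H(v_0) + O = S - 2W + 1 + O \geqslant S - 2W + 1 + (W-1) = S - W$ by hypothesis~(ii), so the "keep $v_0$" branch is indeed the minimum (or tied), and $\OPT(H,c_H) = S - W$. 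Plugging back in, $c_H(V(H)) = S + (S - 2W + 1) = 2(S - W) + 1 = 2\OPT(H,c_H) + 1$, as required. Finally $c_H$ is integer-valued and at least $1$ everywhere (on $V(H')$ by assumption, and $c_H(v_0) = S - 2W+1 \geqslant 1$), and it is not identically zero, so by the discussion preceding the lemma in the excerpt, $(H,c_H)$ is centrally $2$-good in $G$ with respect to $v_0$.

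The one genuinely delicate point — the step I expect to require the most care in the write-up — is the formula $\OPT(H,c_H) = \min\{\,c_H(v_0) + O,\ S - W\,\}$, and in particular the claim that a hitting set of $H$ not containing $v_0$ restricted to $V(H')$ is precisely a hitting set of $H'$ whose complement is a clique. One direction is clear: if $v_0 \notin X$ then for all nonadjacent $u,w \in N(v_0)$ the triple $\{u,v_0,w\}$ induces a $P_3$, so $X$ must contain $u$ or $w$, meaning $V(H') \setminus X$ is a clique; and $X \cap V(H')$ must hit every $P_3$ of $H'$. Conversely, if $X \subseteq V(H')$ hits all $P_3$'s of $H'$ and $V(H')\setminus X$ is a clique, then $H - X$ consists of $v_0$ joined to the clique $V(H')\setminus X$, which is itself a clique, hence $P_3$-free, so $X$ is a hitting set of $H$. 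The minimum cost of such an $X$ is $S$ minus the maximum cost of a clique in $H'$ that is moreover the complement of a hitting set of $H'$; but any clique $K$ in $H'$ has the property that $V(H') \setminus K$ hits every $P_3$ (a $P_3$ has two non-adjacent vertices, which cannot both lie in $K$), so this extra condition is vacuous and the minimum is exactly $S - W$. I would spell this last observation out carefully, since it is what makes the two-branch formula clean.
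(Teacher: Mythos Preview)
Your proof is correct and follows essentially the same approach as the paper: both derive the formula $\OPT(H,c_H)=\min\{c_H(v_0)+\OPT(H',c_{H'}),\ c_{H'}(V(H'))-\omega(H',c_{H'})\}$ and then choose $c_H(v_0)$ so that the desired inequality holds. The paper allows $c_H(v_0)$ to lie anywhere in an interval whose right endpoint is $S-2W+1$, whereas you pick that right endpoint directly; this is exactly the value the paper later uses algorithmically (see the end of the proof of Lemma~\ref{lem:hittingclique2good}), and your verification that the ``keep $v_0$'' branch attains the minimum is a clean way to finish.
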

\begin{proof}
Notice that 
$$\OPT(H,c_H)=\min(c_H(v_0)+\OPT(H',c_{H'}), c_{H'}(V(H'))-\omega(H',c_{H'})),$$ since if $X$ is hitting set of $H$ that does not contain $v_0$, then $H-X$ is a clique.

Let $a=\max(1,c_{H'}(V(H'))-2\OPT(H',c_{H'})-1)$ and $b=c_{H'}(V(H'))-2\omega(H',c_{H'})+1$.  Choose $c_H(v_0) \in \Z_{\geqslant 1}$ such that $a \leqslant c_H(v_0) \leqslant b$.
Note that $c_H(v_0)$ exists since $a \leqslant b$ by conditions \eqref{it:c2} and \eqref{it:c3}.

Suppose $\OPT(H,c_H) = c_H(v_0)+\OPT(H',c_{H'})$.  Since $a \leqslant c_H(v_0)$, $$c_{H}(V(H)) \leqslant 2c_H(v_0)+2\OPT(H',c_{H'})+1=2\OPT(H,c_H)+1.$$ 
Suppose $\OPT(H,c_H) = c_{H'}(V(H'))-\omega(H',c_{H'})$.  Since $c_H(v_0) \leqslant b$, $$c_H(V(H)) \leqslant 2c_{H'}(V(H'))-2\omega(H',c_{H'})+1=2\OPT(H,c_H)+1.$$  In either case, $c_H(V(H)) \leqslant 2\OPT(H,c_H)+1$, as required.
\end{proof}


We abuse notation and regard a clique $X$ of a graph as both a set of vertices and a subgraph.
We call a hitting set $X$ of a graph $G$ a \emph{hitting clique} if $X$ is also a clique. 

\begin{lemma}\label{lem:chordal+2p3free}
Every chordal, $2P_3$-free graph contains a hitting clique.
\end{lemma}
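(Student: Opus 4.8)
The plan is to prove the (equivalent) statement that $G$ has a \emph{maximal} clique $M$ with $G-M$ a cluster graph: such an $M$ is a hitting clique, and conversely any hitting clique is contained in one of this form, since enlarging a hitting clique to a maximal clique keeps it a hitting set (an induced subgraph of a cluster graph is a cluster graph). First I would reduce to the case where $G$ is connected: if two connected components of $G$ each contained an induced $P_3$, those two copies would be vertex‑disjoint and anticomplete, giving an induced $2P_3$ in $G$; hence all but at most one component of $G$ is a clique, and it suffices to find a suitable maximal clique inside the single remaining component (the maximal clique found there, together with the other components, still yields a cluster graph after deletion).

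So assume $G$ is connected and fix a clique tree $T$ of $G$: its nodes are the maximal cliques of $G$, and for each vertex $v$ the maximal cliques containing $v$ induce a subtree $T_v$ of $T$. Suppose, for contradiction, that for \emph{every} maximal clique $M$ the graph $G-M$ is not a cluster graph, i.e.\ it has a connected component that is not a complete graph, equivalently (since it is connected) a component containing an induced $P_3$. The first key observation is that $G-M$ has \emph{at most one} such ``bad'' component: two bad components would be anticomplete and would contain anticomplete induced $P_3$'s, hence an induced $2P_3$ in $G$. The second observation, using the clique tree, is that every connected component of $G-M$ is entirely contained in the union of the cliques of a single branch of $T$ at $M$ (a component's vertices cannot sit in cliques belonging to two different branches at $M$, since such a vertex would lie in $M$). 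Therefore the unique bad component of $G-M$ determines a well‑defined neighbour $b(M)$ of $M$ in $T$, namely the branch in which it lives.

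Now iterate: from any $M$, the sequence $M, b(M), b^2(M), \dots$ is an infinite walk in the finite tree $T$, so it enters a cycle of the functional digraph of $b$. A cycle of length $1$ is impossible since $b(M)\neq M$, and a cycle of length $\geq 3$ is impossible since it would be a cycle in the tree $T$; hence there is an edge $\{M,M'\}$ of $T$ with $b(M)=M'$ and $b(M')=M$. The bad component $C$ of $G-M$ lies on the $M'$-side of this edge, while the bad component $C'$ of $G-M'$ lies on the $M$-side. A short clique‑tree argument — any vertex lying simultaneously in a clique on the $M$-side and a clique on the $M'$-side of the edge $\{M,M'\}$ must belong to $M\cap M'$, hence cannot belong to $C$ (which avoids $M$) nor to $C'$ (which avoids $M'$) — shows that $C$ and $C'$ are disjoint and anticomplete. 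Since each of $C$, $C'$ contains an induced $P_3$, their union induces a $2P_3$ in $G$, a contradiction. Hence some maximal clique $M$ satisfies $G-M$ is a cluster graph, and $M$ is the desired hitting clique.

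I expect the main obstacle to be exactly this last step: making precise, via the clique tree, that the two bad components sit on opposite sides of a tree edge and are therefore anticomplete. Everything else — the reduction to the connected case, the ``at most one bad component'' observation, and the finiteness/cycle argument on $b$ — is routine, though it is worth stating explicitly the two elementary clique‑tree facts used (components of $G-M$ correspond to branches of $T$ at $M$, and a vertex in cliques from two branches at $M$ lies in $M$), as well as the fact that a connected non‑complete graph contains an induced $P_3$ (take the first three vertices of a shortest path between two non‑adjacent vertices).
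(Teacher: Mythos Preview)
Your proof is correct and takes essentially the same clique-tree approach as the paper. The paper's version is marginally more direct: rather than defining $b$ on nodes, it orients each edge $KK'$ of the clique tree toward whichever side of $G-(K\cap K')$ fails to be a cluster graph (at most one side can fail, by $2P_3$-freeness), and then uses that any orientation of a tree is acyclic and hence has a sink $K_0$, which is immediately the desired hitting clique --- so it never needs your final step of extracting the explicit $2P_3$ from the $2$-cycle.
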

\begin{proof}
Let $G$ be a chordal, $2P_3$-free graph. Since $G$ is chordal, $G$ admits a \emph{clique tree} $T$ (see~\cite{blair1993introduction}). In $T$, the vertices are the maximal cliques of $G$ and, for every two maximal cliques $K$, $K'$, the intersection $K\cap K'$ is contained in every clique of the $K$--$K'$ path in $T$. For an edge $e:=KK'$ of $T$, Let $T_1$ and $T_2$ be the components of $T - e$ and $G_1$ and $G_2$ be the subgraphs of $G$ induced by the union of all the cliques in $T_1$ and $T_2$, respectively.  It is easy to see that deleting $K \cap K'$ separates $G_1$ from $G_2$ in $G$.  Now, since $G$ is $2P_3$-free, at least one of $G_1':=G_1 -(K \cap K')$ or $G_2':=G_2 - (K \cap K')$ is a cluster graph. If both $G_1'$ and $G_2'$ are cluster graphs, we are done since $K\cap K'$ is the desired hitting clique. Otherwise, if $G_i'$ is not a cluster graph, then we can orient $e$ towards $T_i$.  Applying this argument on each edge, we define an orientation of $T$, which must have a sink $K_0$. But then removing $K_0$ from $G$ leaves a cluster graph, and we are done. Since the clique tree of a chordal graph can be constructed in polynomial time~\cite{blair1993introduction}, the hitting clique can be found in polynomial time.  
\end{proof}

\begin{figure}[h!]
\centering
\begin{tikzpicture}[inner sep=3pt,scale=.75]
\tikzstyle{vtx}=[circle,draw,thick];
\tikzstyle{vtxg}=[circle,draw,thick,fill=black!50];
\tikzstyle{vtxr}=[circle,draw,thick,fill=red!50];
\tikzstyle{vtxp}=[circle,draw,thick,fill=pink!50];
\tikzstyle{vtxb}=[circle,draw,thick,fill=blue!50];
\node[vtxb] (v3) at (-4.5,-.5) {};
\node[vtxb] (v1) at (-4,.5) {};
\node[vtxb] (v2) at (-6,1) {};
\node[vtxb] (v4) at (-3,1.0) {};
\node[vtx] (v5) at (-3.5,2.0) {};
\node[vtx] (v6) at (-4.5,2.5) {};
\node[vtx] (v7) at (-5.5,-.2) {};
\node[vtxg] (v0) at (-5.5,2) {};
\draw[thick,-,>=latex] (v3) -- (v1);
\draw[thick,-,>=latex] (v1) -- (v2);
\draw[thick,-,>=latex] (v2) -- (v0);
\draw[thick,-,>=latex] (v0) -- (v3);
\draw[thick,-,>=latex] (v2) -- (v4);
\draw[thick,-,>=latex] (v1) -- (v4);
\draw[thick,-,>=latex] (v2) -- (v3);
\draw[thick,-,>=latex] (v2) -- (v5);
\draw[thick,-,>=latex] (v3) -- (v4);
\draw[thick,-,>=latex] (v0) -- (v5);
\draw[thick,-,>=latex] (v0) -- (v4);
\draw[thick,-,>=latex] (v0) -- (v1);
\draw[thick,-,>=latex] (v0) -- (v6);
\draw[thick,-,>=latex] (v1) -- (v6);
\draw[thick,-,>=latex] (v3) -- (v7);
\draw[thick,-,>=latex] (v0) -- (v7);
\node at (-3,0) {$H$};
\node[vtxb] (w3) at (1,-.5) {};
\node[vtxb] (w1) at (1.5,.5) {};
\node[vtxb] (w2) at (-0.5,1) {};
\node[vtxb] (w4) at (2.5,1) {};
\node[vtx] (w5) at (2,2.0) {};
\node[vtx] (w6) at (1.0,2.5) {};
\node[vtx] (w7) at (0,-.2) {};
\draw[thick,-,>=latex] (w3) -- (w4);
\draw[thick,-,>=latex] (w2) -- (w4);
\draw[thick,-,>=latex] (w1) -- (w4);
\draw[thick,-,>=latex] (w3) -- (w1);
\draw[thick,-,>=latex] (w1) -- (w2);
\draw[thick,-,>=latex] (w2) -- (w3);
\draw[thick,-,>=latex] (w2) -- (w5);
\draw[thick,-,>=latex] (w1) -- (w6);
\draw[thick,-,>=latex] (w3) -- (w7);
\node at (2.5,0) {$H-v_0$};
\node at (7.5,1.5){$v$};
\node[vtxb] (z4) at (7.5,1.0) {};
\node[vtx] (z5) at (7.0,2.0) {};
\node[vtx] (z6) at (6.0,2.5) {};
\node[vtx] (z7) at (5.0,-.2) {};
\node at (7.5,0) {$S_v$};
\end{tikzpicture}
\caption{Here $H$ is twin-free, $v_0$ is the gray vertex and the blue vertices form a hitting clique $K_0$ for $H-v_0$, which is chordal and $2P_3$-free. For $v\in K_0$, the set $S_v$ defined as in the proof of Lemma \ref{lem:hittingclique2good} consists of the unique maximal independent set containing $v$. We obtain $c_{H} = (\mathbf{6},{\color{blue}{1}},{\color{blue}{1}},{\color{blue}{1}},{\color{blue}{1}},3,3,3)$, which is easily seen to be centrally 2-good with respect to $v_0$.}
\label{fig:hittingclique}
\end{figure}
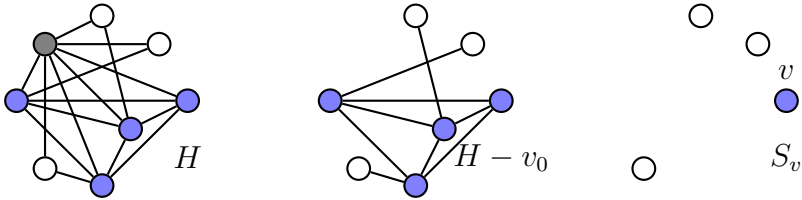

We are ready to prove the base case for Theorem~\ref{thm:localratio}. For a graph $H$, we let $|H|$ denote the number of vertices of $H$.

\begin{lemma} \label{lem:hittingclique2good}
Let $H$ be a twin-free graph with an apex vertex $v_0$ such that $H-v_0$ is chordal and $2P_3$-free. There exists a cost function $c_H$ such that $(H,c_H)$ is centrally $2$-good with respect to $v_0$. Moreover, $c_H$ can be found in time $\mathcal{O}(|H|^3)$.  
\end{lemma}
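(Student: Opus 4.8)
The plan is to invoke Lemma~\ref{lem:chordal+2p3free} to produce a hitting clique $K_0$ of $H' := H - v_0$, to read off $c_{H'}$ from $K_0$, and then to apply Lemma~\ref{lem:without_v0}. Concretely, I would set $c_{H'}(v) := 1$ for $v \in K_0$ and $c_{H'}(u) := |K_0 \setminus N(u)|$ for $u \in V(H') \setminus K_0$. To make the two inequalities of Lemma~\ref{lem:without_v0} transparent, I would choose $K_0$ carefully, so that it is a maximal clique, $H' - K_0$ is a cluster graph, and for every $v \in K_0$ the set $V(H') \setminus N[v]$ is \emph{independent}; call such a $K_0$ a \emph{good} hitting clique. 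For a good $K_0$ the set $S_v := \{v\} \cup (V(H') \setminus N[v])$ is the unique maximal independent set of $H'$ containing $v$, and one checks directly that $c_{H'}(u) = |\{v \in K_0 : u \in S_v\}|$ for every $u \in V(H')$. Since $K_0$ is maximal, every $u \notin K_0$ has a non-neighbor in $K_0$, so $c_{H'}$ indeed takes values in $\Z_{\geqslant 1}$.

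With this choice, hypotheses (i) and (ii) of Lemma~\ref{lem:without_v0} follow quickly. For any clique $Q$ of $H'$ and any $v \in K_0$ we have $|Q \cap S_v| \leqslant 1$ (a clique meets an independent set at most once), so $c_{H'}(Q) = \sum_{v \in K_0} |Q \cap S_v| \leqslant |K_0|$; as $c_{H'}(K_0) = |K_0|$, this gives $\omega(H', c_{H'}) = |K_0|$. Because $H$ is twin-free, no vertex of $H'$ is universal in $H'$ (it would be a true twin of $v_0$), hence $V(H') \setminus N[v] \neq \varnothing$ and $|S_v| \geqslant 2$ for all $v \in K_0$, so $c_{H'}(V(H')) = \sum_{v \in K_0} |S_v| \geqslant 2|K_0| = 2\omega(H', c_{H'})$, which is (i). For (ii) I claim that every hitting set $X$ of $H'$ meets $S_v$ for all but at most one $v \in K_0$: if $X$ were disjoint from both $S_v$ and $S_{v'}$ for distinct $v, v' \in K_0$, then $S_v \cup S_{v'} \subseteq H' - X$; since $v \sim v'$ and $H$ is twin-free, $v$ and $v'$ are not true twins in $H'$, so after possibly swapping them there is a vertex $c \neq v, v'$ with $c \sim v$ and $c \not\sim v'$; this $c$ lies in $V(H') \setminus N[v'] \subseteq S_{v'} \subseteq H' - X$, and then $\{v, v', c\}$ induces a $P_3$ in $H' - X$, a contradiction. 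Hence $c_{H'}(X) = \sum_{v \in K_0} |X \cap S_v| \geqslant |K_0| - 1$, so $\OPT(H', c_{H'}) \geqslant \omega(H', c_{H'}) - 1$, which is (ii). Lemma~\ref{lem:without_v0} then extends $c_{H'}$ to the desired $c_H$.

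The real difficulty — and the step I expect to be the bulk of the proof — is showing that a \emph{good} hitting clique exists and is polynomial-time computable. The constructive proof of Lemma~\ref{lem:chordal+2p3free} supplies some hitting clique, and enlarging it to a maximal clique keeps $H' - K_0$ a cluster graph; the obstruction is that some $v \in K_0$ may miss two adjacent vertices $a, b$ lying in a common component $C_i$ of $H' - K_0$, so that $V(H') \setminus N[v]$ fails to be independent. This is where the structural hypotheses must be used: twin-freeness forces every component of $H'$ that is a clique to be a single vertex, which already handles all components of $H' - K_0$ that are not attached to $K_0$; for the attached components I would show, combining chordality with $2P_3$-freeness and twin-freeness, that either such a "bad triple" $(v; a, b)$ cannot arise for an appropriately chosen $K_0$, or it can be removed by a local exchange on $K_0$ that preserves the hitting-clique property while strictly decreasing a suitable potential (e.g.\ $\sum_{v \in K_0}$ of the number of components of $H'-K_0$ missing more than one vertex of $v$). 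Once this is in place, the polynomial running time is immediate, since the clique tree, the hitting clique of Lemma~\ref{lem:chordal+2p3free}, and each exchange step are all computable in polynomial time, and the cost function is then explicit.
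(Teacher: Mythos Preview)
Your overall architecture matches the paper's: take a maximal hitting clique $K_0$ of $H' = H - v_0$ (Lemma~\ref{lem:chordal+2p3free}), build stable sets $S_v$ indexed by $v \in K_0$, set $c_{H'} = \sum_{v \in K_0} \chi^{S_v}$, and feed this into Lemma~\ref{lem:without_v0}. Your arguments for $|S_v| \geqslant 2$ and for ``every hitting set meets all but one $S_v$'' are essentially the paper's (P2) and (P3).

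The genuine gap is your choice $S_v = \{v\} \cup (V(H') \setminus N[v])$ and the accompanying claim that a ``good'' hitting clique exists. It does not in general. Take $H' = P_5$ on $a\!-\!b\!-\!c\!-\!d\!-\!e$; then $H = H' + v_0$ is twin-free, and $H'$ is chordal and $2P_3$-free. The only maximal hitting cliques are $\{b,c\}$ and $\{c,d\}$. For $K_0 = \{b,c\}$ one has $V(H') \setminus N[b] = \{d,e\}$, an edge; for $K_0 = \{c,d\}$ one has $V(H') \setminus N[d] = \{a,b\}$, an edge. So no good $K_0$ exists, and no exchange argument can manufacture one. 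Concretely, with $K_0 = \{b,c\}$ your cost function is $c_{H'} = (1,1,1,1,2)$ on $(a,b,c,d,e)$; the edge $\{d,e\}$ has weight $3$, so $\omega(H',c_{H'}) = 3 > |K_0| = 2$, while $\{c\}$ is a hitting set of cost $1$, so condition~(ii) of Lemma~\ref{lem:without_v0} fails.

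The paper avoids this by \emph{not} taking all non-neighbours of $v$. Instead, for each cluster $K_i$ of $H' - K_0$ that is not complete to $v$, it picks a \emph{single} non-neighbour $\varphi_i(v) \in K_i$, and sets $S_v = \{v\} \cup \{\varphi_i(v)\}_i$. This makes each $S_v$ independent automatically (one vertex per cluster), so $\omega(H',c_{H'}) \leqslant |K_0|$ is immediate. The work shifts to guaranteeing coverage (every vertex lies in some $S_v$): chordality forces the $K_0$--$K_i$ bipartite adjacency matrix to be staircase-shaped (no $2\times 2$ permutation submatrix, else a $C_4$), and twin-freeness forbids repeated columns; choosing $\varphi_i(v)$ to be the \emph{first} column with a $0$ in row $v$ then hits every column. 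In the $P_5$ example this yields $S_b = \{b,d\}$, $S_c = \{c,a,e\}$ and $c_{H'} = \mathbf{1}$, for which both conditions of Lemma~\ref{lem:without_v0} hold.
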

\begin{proof}
By Lemma~\ref{lem:chordal+2p3free}, some maximal clique of $H - v_0$, say $K_0$, is a hitting set.

We claim that there is a family of stable sets $\mathcal{S}=\{S_v \mid v\in K_0\}$ of $H-v_0$ satisfying the following properties:
\begin{enumerate}[({P}1)]
    \item \label{P:coverage} every vertex of $H-v_0$ is contained in some $S_v$;
    \item \label{P:size_at_least_2} for each $v \in K_0$, $S_v$ contains $v$ and at least one other vertex;
    \item \label{P:every_two_induce_P3} for every two distinct vertices $v, v'\in K_0$, $H[S_v\cup S_{v'}]$ contains a $P_3$.
\end{enumerate}

Before proving the claim, we prove that it implies the lemma. Consider the cost function $c_{H'} := \sum_{v\in K_0} \chi^{S_v}$ on the vertices of $H':=H-v_0$ defined by giving to each vertex $u$ a cost equal to the number of stable sets $S_v$ that contain $u$ (see Figure \ref{fig:hittingclique}). It suffices to show that $c_{H'}$ satisfies the conditions of Lemma~\ref{lem:without_v0} and can therefore be extended to a cost function $c_H$ on $V(H)$ such that $(H, c_H)$ is centrally $2$-good with respect to $v_0$.  

First, by~(P\ref{P:coverage}), we have $c_{H'}(u) \in \Z_{\geqslant 1}$ for all $u \in V(H')$. Second, condition (\ref{it:c2}) of Lemma~\ref{lem:without_v0} follows from~(P\ref{P:size_at_least_2}) since each stable set $S_v$ contributes at least two units to $c_{H'}(V(H'))$ and at most one unit to $\omega(H',c_{H'})$. Third,~(P\ref{P:every_two_induce_P3}) implies that every hitting set of $H'$ meets every stable set $S_v$, except possibly one. Hence, $\OPT(H',c_{H'}) \geqslant |K_0|-1$. Also, every clique of $H'$ meets every stable set $S_v$ in at most one vertex, implying that $\omega(H',c_{H'}) \leqslant |K_0|$, and equality holds since $c_{H'}(K_0)=|K_0|$. Putting the last two observations together, we see that $\OPT(H',c_{H'}) \geqslant |K_0|-1 = \omega(H',c_{H'})-1$ and hence condition \eqref{it:c3} of Lemma~\ref{lem:without_v0} holds.

Now, we prove that our claim holds. Let $K_1, \dots, K_t$ denote the clusters (maximal cliques) of cluster graph $H - v_0 - K_0$. For $i \in [t]$, consider the submatrix $A_i$ of the adjacency matrix $A(H)$ with rows indexed by the vertices of $K_0$ and  columns indexed by the vertices of $K_i$. 

Notice that $A_i$ contains neither $\begin{pmatrix} 1& 0\\ 0& 1 \end{pmatrix}$ nor $\begin{pmatrix} 0& 1\\ 1& 0 \end{pmatrix}$ as a submatrix, as this would give a $C_4$ contained in $H - v_0$, contradicting the chordality of $H - v_0$. Hence, after permuting its rows and columns if necessary, $A_i$ can be assumed to be staircase-shaped. That is, every row of $A_i$ is nonincreasing and every column nondecreasing. Notice also that $A_i$ does not have two equal columns, since these would correspond to two vertices of $K_i$ that are twins. 

For each $K_i$ that is not complete to $v \in K_0$, define $\varphi_i(v)$ as the vertex $u \in K_i$ whose corresponding column in $A_i$ is the \emph{first} containing a $0$ in row $v$. Now, for each $v$, let $S_v$ be the set including $v$, and $\varphi_i(v)$, for each $K_i$ that is not complete to $v$. 

Because $K$ is maximal, no vertex $u \in K_i$ is complete to $K_0$. Since no two columns of $A_i$ are identical, we must have $u = \varphi_i(v)$ for some $v \in K_0$. This proves~(P\ref{P:coverage}). 

Notice that $v \in S_v$ by construction and that $|S_v| \geqslant 2$ since otherwise, $v$ would be apex in $H$ and thus a twin of $v_0$. Hence,~(P\ref{P:size_at_least_2}) holds. 

Finally, consider any two distinct vertices $v, v' \in K_0$. Since $v, v'$ are not twins, the edge $vv'$ must be in a $P_3$ contained in $H - v_0$. Assume, without loss of generality, that there is a vertex $u \in K_i$ adjacent to $v$ and not to $v'$ for some $i \in [t]$. Then $\{v,v',\varphi_i(v')\}$ induces a $P_3$ contained in $H[S_v\cup S_{v'}]$, proving~(P\ref{P:every_two_induce_P3}). This concludes the proof of the claim.

We observe that the collection $\mathcal{S}$ can be computed in $\mathcal{O}(|H|^3)$ time.  This yields the 
restriction $c_H$ to $H'$. Since $H'$ is chordal, $\omega(H',c_{H'})$ can be computed in $\mathcal{O}(|H'|^2)$ time.  We then just let $c_H(v_0) := c_{H'}(V(H'))-2\omega(H',c_{H'})+1=c_{H'}(V(H'))-2|K_0|+1$. This sets $c_H(v_0)$ equal to the value $b$ in the proof of Lemma~\ref{lem:without_v0}.  Therefore, $c_H$ can be constructed in $\mathcal{O}(|H|^3)$ time.
\end{proof}

\subsection{Handling twins in $G[N[v_0]]$}\label{sec:truetwins}

We now deal with the general case where $G[N[v_0]]$ contains twins.
We start with an extra bit of terminology relative to twins. Let $G$ be a twin-free graph, and $v_0 \in V(G)$. Suppose that $u, u'$ are twins in $G[N[v_0]]$. Since $G$ is twin-free, there exists a vertex $v$ that is adjacent to exactly one of $u$, $u'$ in $G$. We say that $v$ is a \emph{distinguisher} for the edge $uu'$ (or for the pair $\{u,u'\}$). Notice that either $uu'v$ or $u'uv$ is an induced $P_3$. Notice also that $v$ is at distance $2$ from $v_0$. 

Now, consider a graph $H$ with a special vertex $v_0 \in V(H)$ (the \emph{root} vertex) such that 
\begin{enumerate}[({H}1)]
\item \label{H:radius_at_most_2} every vertex is at distance at most $2$ from $v_0$, and 
\item \label{H:distinguished} every pair of vertices that are twins in $H[N[v_0]]$ has a distinguisher.
\end{enumerate}

Let $v$ be any vertex that is at distance $2$ from $v_0$. Consider the equivalence relation $\equiv$ on $N[v_0]$ with $u \equiv u'$ whenever $u = u'$ or $u, u'$ are twins in $H - v$. Observe that the equivalence classes of $\equiv$ are of size at most $2$ since, if $u, u', u''$ are distinct vertices with $u \equiv u' \equiv u''$, then $v$ cannot distinguish every edge of the triangle on $u$, $u'$ and $u''$. Hence, two of these vertices are twins in $H$, which contradicts (H\ref{H:distinguished}).

From what precedes, the edges contained in $N[v_0]$ that do not have a distinguisher in $H-v$ form a matching $M := \{u_1u'_1,\ldots,u_ku'_k\}$ (possibly, $k = 0$). Let $H'$ denote the graph obtained from $H$ by deleting $v$ and exactly one endpoint from each edge of $M$. Notice that the resulting subgraph is the same, up to isomorphism, no matter which endpoints are chosen. 

The lemma below states how we can obtain a cost function $c_H$ that certifies that $H$ is centrally $2$-good from a cost function $c_{H'}$ that certifies that $H'$ is centrally $2$-good. It is inspired by~\cite[Lemma 3]{FJS20}. See Figure~\ref{fig:rule2} for an example. 

\begin{lemma} \label{lem:get_rid_of_true_twins}
Let $H$ be any graph satisfying (H\ref{H:radius_at_most_2}) and (H\ref{H:distinguished}) for some $v_0 \in V(H)$. Let $v \in N_2(v_0)$. Let $M := \{u_1u'_1,\ldots,u_ku'_k\}$ be the matching formed by the edges in $N[v_0]$ whose unique distinguisher is $v$, where $u'_i \neq v_0$ for all $i$ (we allow the case $k = 0$). Let $H' := H - u'_1 - \dots - u'_k - v$. Given a cost function $c_{H'}$ on $V(H')$, define a cost function $c_{H}$ on $V(H)$ by letting $c_H(u'_i) := c_{H'}(u_i)$ for $i \in [k]$, $c_H(v) := \sum_{i = 1}^k c_{H'}(u_i) = \sum_{i=1}^k c_{H}(u'_i)$, and $c_{H}(u) := c_{H'}(u)$ otherwise. First, $H'$ satisfies (H\ref{H:radius_at_most_2}) and (H\ref{H:distinguished}). Second, if $(H',c_{H'})$ is centrally $2$-good, then $(H,c_{H})$ is centrally $2$-good.
\end{lemma}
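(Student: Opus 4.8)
The plan is to prove the two assertions separately; the second one will reduce to an inequality comparing $\OPT(H,c_H)$ with $\OPT(H',c_{H'})$.

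\emph{First assertion.} Here I would just unwind definitions. Note that $V(H')=V(H)\setminus(\{v\}\cup\{u_1',\dots,u_k'\})$, and since $v\in N_2(v_0)$ while $u_1',\dots,u_k'\in N(v_0)$, also $N_{H'}[v_0]=N_H[v_0]\setminus\{u_1',\dots,u_k'\}$. For (H\ref{H:radius_at_most_2}): a vertex $x\in V(H')\setminus\{v_0\}$ is either adjacent to $v_0$, or has a neighbour $w\in N(v_0)$; if $w$ survives we are done, and if $w=u_i'$ then, as $x\neq v$, $x$ is also adjacent to the true twin $u_i\in N(v_0)$ of $u_i'$ in $H-v$, which survives. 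For (H\ref{H:distinguished}): let $a,b$ be true twins in $H'[N_{H'}[v_0]]$. If they are already true twins in $H[N_H[v_0]]$, then by (H\ref{H:distinguished}) for $H$ they have a distinguisher $d$, which is necessarily in $N_2(v_0)$, hence in $V(H')\cup\{v\}$; if $d\in V(H')$ we are done, and if $d=v$ then $ab\notin M$ (both $a,b$ survive), so $ab$ has another distinguisher $d'\neq v$, which lies in $V(H')$ or equals some $u_j'$, whose surviving true twin $u_j$ then also distinguishes $a,b$. If $a,b$ are \emph{not} true twins in $H[N_H[v_0]]$, the witness lies in $N_H[v_0]\setminus N_{H'}[v_0]=\{u_1',\dots,u_k'\}$, say it is $u_j'$, and then its surviving true twin $u_j\in N_{H'}[v_0]$ distinguishes $a,b$ in $H'$, contradicting that $a,b$ are true twins in $H'[N_{H'}[v_0]]$. (In each subcase one has $a,b\notin\{u_j,u_j'\}$: indeed $a,b\in V(H')$, and $a=u_j$ or $b=u_j$ is impossible, since $u_ju_j'$ is an edge while $a\sim b$, which via the twin property of $u_j,u_j'$ in $H-v$ would force $u_j'$ adjacent to both $a$ and $b$.)

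\emph{Second assertion, set-up.} First, $c_H$ is $\Z_{\geqslant 0}$-valued and $c_H(w)\geqslant 1$ for all $w\in N_H[v_0]$ — clear for $w\notin\{u_1',\dots,u_k'\}$, and $c_H(u_i')=c_{H'}(u_i)\geqslant 1$ because $u_i\in N_{H'}[v_0]$. Using $c_H(u_i')=c_{H'}(u_i)$ and $c_H(v)=\sum_i c_{H'}(u_i)$ one computes $c_H(V(H))=c_{H'}(V(H'))+2\sum_{i=1}^{k}c_{H'}(u_i)$, so, in view of the hypothesis $c_{H'}(V(H'))\leqslant 2\OPT(H',c_{H'})+1$, it suffices to prove $\OPT(H,c_H)\geqslant \OPT(H',c_{H'})+\sum_{i=1}^{k}c_{H'}(u_i)$. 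I would split on whether an optimal hitting set of $H$ contains $v$, writing $\OPT(H,c_H)=\min\bigl(c_H(v)+\OPT(H-v,c_H|_{H-v}),\,\OPT^{-}\bigr)$, where $\OPT^{-}$ is the minimum cost of a hitting set of $H$ avoiding $v$. In the first case, $\OPT(H-v,c_H|_{H-v})\geqslant \OPT(H',c_{H'})$ because $H'$ is an induced subgraph of $H-v$ on which $c_H$ restricts to $c_{H'}$; combined with $c_H(v)=\sum_i c_{H'}(u_i)$ this gives the desired inequality.

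\emph{Second assertion, main case.} Fix a hitting set $X$ of $H$ with $v\notin X$ and $c_H(X)=\OPT^{-}$. Since each $\{u_i,u_i',v\}$ induces a $P_3$ and $v\notin X$, we have $X\cap\{u_i,u_i'\}\neq\varnothing$ for every $i$. Form $Y$ from $X$ by deleting every $u_i'$, and additionally deleting $u_i$ whenever $u_i\in X$ but $u_i'\notin X$. Then $Y\subseteq V(H')$, and for each $i$ exactly one vertex of cost $c_{H'}(u_i)$ is removed from $X$, so $c_{H'}(Y)=c_H(X)-\sum_i c_{H'}(u_i)$. The key point is that $Y$ is a hitting set of $H'$: if some $P_3$ $P$ of $H'$ were missed by $Y$, then — since $P$ avoids $v$ and every $u_i'$ — every vertex of $P$ that lies in $X$ is some $u_i$ with $u_i'\notin X$; replacing each such $u_i$ in $P$ by $u_i'$ yields, because $u_i,u_i'$ are true twins in $H-v$ and the remaining vertices of $P$ are all $\neq v$, an induced $P_3$ $P^{\ast}$ of $H$ with $P^{\ast}\cap X=\varnothing$, contradicting that $X$ hits every $P_3$ of $H$. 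Hence $\OPT(H',c_{H'})\leqslant c_{H'}(Y)=\OPT^{-}-\sum_i c_{H'}(u_i)$, which finishes this case and thus the proof of the inequality and of the lemma.

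I expect the delicate point to be this last verification that $Y$ hits every $P_3$ of $H'$: one has to treat $P_3$'s of $H'$ passing through one or two ``discarded'' vertices $u_i$ (those with $u_i'\notin X$), and check that the simultaneous true-twin substitutions producing $P^{\ast}$ really give an induced $P_3$ of $H$. What makes this go through is precisely that $H'$ contains no $u_i'$-vertex, so any such $P_3$ of $H'$ can be lifted to an induced $P_3$ of $H$ that $X$ fails to hit. The remaining items — the bookkeeping in the set-up paragraph and the parenthetical checks in the first assertion — are routine.
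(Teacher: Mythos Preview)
Your proof is correct and follows the same route as the paper's: verify (H\ref{H:radius_at_most_2})--(H\ref{H:distinguished}) for $H'$, compute $c_H(V(H))=c_{H'}(V(H'))+2\sum_i c_{H'}(u_i)$, and establish $\OPT(H,c_H)\geqslant\OPT(H',c_{H'})+\sum_i c_{H'}(u_i)$ via the observation that every hitting set of $H$ either contains $v$ or meets each pair $\{u_i,u_i'\}$. Your treatment is in fact more detailed than the paper's, which asserts the key inequality with only the hint ``one can check that\ldots''; your explicit construction of $Y$ and the twin-substitution argument for induced $P_3$'s fill in precisely the step the paper leaves to the reader.
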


\begin{proof}
For the first part, notice that $H'$ satisfies (H\ref{H:radius_at_most_2}) by our choice of $v$. Indeed, deleting $v$ does not change the distance of the remaining vertices from $v_0$. 

We now prove that $H'$ satisfies (H\ref{H:distinguished}). First notice that in $H-v$ the twins in $N[v_0]$ are exactly $(u_1,u'_1),\ldots,(u_k,u'_k)$.  Next, for each edge $e\in E(H[N[v_0]])$, $e$ has at least one distinguisher different than $v$ unless $e\in M$. Moreover, each $u_i'$ is a distinguisher for $e$ if and only if $u_i$ is a distinguisher for $e$. Thus, each edge of $H'[N[v_0]]$ still has at least one distinguisher, which proves (H\ref{H:distinguished}).

For the second part, notice that $c_H(u) \geqslant 1$ for all $u \in N[v_0]$ since $c_{H'}(u) \geqslant 1$ for all $u \in N[v_0] \setminus \{v,u'_1,\ldots,u'_k\}$. To argue that $c_H(V(H)) \leqslant 2 \OPT(H,c_H) + 1$, one can check that any hitting set of $H$ must either contain $v$ or at least one endpoint of each edge $u_iu_i' \in M$. Hence $\OPT(H,c_H) \geqslant \sum_{i=1}^{k}c_{H'}(u_i) + \OPT(H',c_{H'})$. 

Since $(H,c_{H'})$ is centrally $2$-good, $c_{H'}(V(H')) \leqslant 2 \OPT(H',c_{H'}) + 1$. It follows that
\begin{align*}
c_{H}(V(H)) &= \overbrace{c_{H}(v) + \sum_{i=1}^{k}c_{H}(u'_i)}^{= 2 \sum_{i=1}^{k}c_{H'}(u_i)} + \overbrace{c_{H'}(V(H'))}^{\leqslant 2 \OPT(H',c_{H'}) + 1}\\
&\leqslant 2\OPT(H,c_H) + 1\,. 
\end{align*}
\let\qed\relax
\end{proof}

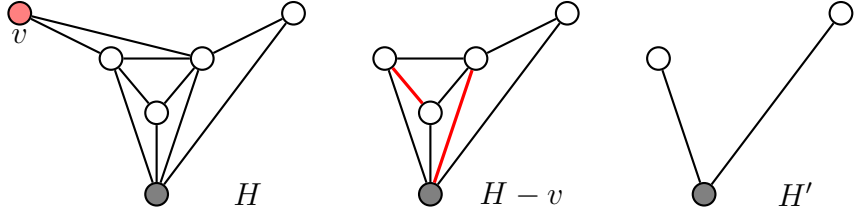
\begin{figure}[h!]
\centering
\begin{tikzpicture}[inner sep=3pt,scale=.6]
\tikzstyle{vtx}=[circle,draw,thick];
\tikzstyle{vtxg}=[circle,draw,thick,fill=black!50];
\tikzstyle{vtxr}=[circle,draw,thick,fill=red!50];
\node[vtxr] (v5) at (-3,1) {};
\node at (-3,.5){$v$};
\node[vtx] (v3) at (1,0) {};
\node[vtx] (v1) at (-1,0) {};
\node[vtx] (v2) at (0,-1.2) {};
\node[vtx] (v4) at (3,1) {};
\node[vtxg] (v0) at (0,-3) {};
\draw[thick] (v0) -- (v2);
\draw[thick] (v3) -- (v5);
\draw[thick] (v0) -- (v1);
\draw[thick] (v0) -- (v3);
\draw[thick] (v0) -- (v4);
\draw[thick] (v2) -- (v1);
\draw[thick] (v1) -- (v3);
\draw[thick] (v2) -- (v3);
\draw[thick] (v5) -- (v1);
\draw[thick] (v4) -- (v3);
\node at (2,-3) {$H$};
\node[vtx] (w3) at (7,0) {};
\node[vtx] (w1) at (5,0) {};
\node[vtx] (w2) at (6,-1.2) {};
\node[vtx] (w4) at (9,1) {};
\node[vtxg] (w0) at (6,-3) {};
\draw[thick] (w0) -- (w2);
\draw[thick] (w0) -- (w1);
\draw[very thick,red] (w0) -- (w3);
\draw[thick] (w0) -- (w4);
\draw[very thick,red] (w2) -- (w1);
\draw[thick] (w1) -- (w3);
\draw[thick] (w2) -- (w3);
\draw[thick] (w4) -- (w3);
\node at (8,-3) {$H-v$};
\node[vtx] (z1) at (11,0) {};
\node[vtx] (z4) at (15,1) {};
\node[vtxg] (z0) at (12,-3) {};
\draw[thick] (z0) -- (z1);
\draw[thick] (z0) -- (z4);
\node at (14,-3) {$H'$};
\end{tikzpicture}
\caption{Here $v_0$ is the gray vertex and $v$ is the red vertex. $H - v$ violates (H\ref{H:distinguished}), and contains two pairs of twins, indicated by the red edges. Lemma \ref{lem:get_rid_of_true_twins} applies. We see that $H'$ is a $P_3$, for which Lemma~\ref{lem:hittingclique2good} gives $c_{H'} = \mathbf{1}_{H'}$. In $(H,c_H)$, all vertices get a unit cost except $v$, which gets a cost of $2$, since there are $2$ pairs of twins in $H - v$. Thus, $c_H = (\mathbf{1},{\color{red}{2}},1,1,1,1)$, where the entries corresponding to $v_0$ and $v$ are bold and red, respectively.} 
\label{fig:rule2}
\end{figure}

\subsection{Proof of Theorem~\ref{thm:localratio}}\label{sec:putting}

We are ready to prove Theorem~\ref{thm:localratio}.

\begin{proof}
We can decide in polynomial time (see for instance \cite{tarjan1984}) if $H[N(v_0)]$ is chordal, and if not, output a hole of $H[N(v_0)]$. If the latter holds, we are done by Lemma \ref{lem:wheel}.  If the former holds, we can decide in polynomial time (see \cite{tarjan1985decomposition}, and the proof of Lemma \ref{lem:chordal+2p3free}) 
whether $H$ contains a $2P_3$. If it does, we are done by Lemma \ref{lem:2p3free}. 

From now on, assume that the subgraph induced by $N(v_0)$ is chordal and $2P_3$-free. This is done without loss of generality. Notice that hypotheses (H\ref{H:radius_at_most_2}) and (H\ref{H:distinguished}) from Section \ref{sec:truetwins} hold for $H$. This is obvious for (H\ref{H:radius_at_most_2}). To see why (H\ref{H:distinguished}) holds, remember that $G$ is twin-free. Hence, every edge $uu'$ contained in $N[v_0]$ must have a distinguisher in $G$, which is in $N_{\leqslant 2}[v_0]$. (In fact, notice that if $u$ and $u'$ are twins in $H[N[v_0]]$ then the distinguisher is necessarily in $N_2(v_0)$.)

We repeatedly apply Lemma~\ref{lem:get_rid_of_true_twins} in order to delete each vertex of $N_2(v_0)$ one after the other and reduce to the case where $H$ is a twin-free graph for which $v_0$ is apex. We can then apply Lemma~\ref{lem:hittingclique2good}. The whole process takes polynomial time.
\end{proof}

\section{Running-time Analysis} \label{sec:analysis}

We now analyse the running-time of Algorithm~\ref{algo}.  We assume that input graphs are given by their adjacency matrix. We need the following easy lemma, whose proof we include for completeness. 

\begin{lemma} \label{lem:equalrows}
Given a matrix $N \in \{0,1\}^{r \times c}$, the set of all equivalence classes of equal rows of $N$ can be found in time $\mathcal{O}(r c)$. 
\end{lemma}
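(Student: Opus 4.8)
The plan is to use iterative partition refinement (equivalently, a radix sort of the rows viewed as binary strings of length $c$). We maintain a partition $\mathcal{P}$ of the row index set $\{1,\dots,r\}$ with the invariant that after processing the first $j$ columns, two indices lie in the same block of $\mathcal{P}$ if and only if the corresponding rows of $N$ agree on columns $1,\dots,j$. Initially, before any column is processed, $\mathcal{P} = \{\{1,\dots,r\}\}$, which trivially satisfies the invariant for $j=0$.

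The refinement step for column $j$ works as follows: scan the entries $N_{1,j}, N_{2,j}, \dots, N_{r,j}$ once; for each row index $i$, look up the block $B$ currently containing $i$ and move $i$ into a child bucket of $B$ determined by the bit $N_{i,j} \in \{0,1\}$. To keep this step within $O(r)$ time regardless of how many blocks $\mathcal{P}$ currently has, the two child buckets of a block are created \emph{lazily} the first time they are needed, using an auxiliary array indexed by block identifiers; only blocks that are actually touched during the scan are split, and the auxiliary array is reset by walking back over the touched blocks afterwards. Thus each of the $c$ columns is handled in $O(r)$ time, for a total of $O(rc)$; reading the input matrix itself already costs $\Theta(rc)$, so this is optimal up to constants. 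After column $c$ has been processed, the blocks of $\mathcal{P}$ are exactly the equivalence classes of equal rows, by the invariant, and outputting them takes $O(r)$ additional time.

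Correctness is immediate from the invariant: two rows are equal precisely when they agree on all $c$ columns, which by induction on $j$ is exactly the condition for the two indices to remain together through all $c$ refinement steps. The only point requiring care is the bookkeeping that guarantees the per-column cost is $O(r)$ and not $O(r + |\mathcal{P}|)$ or worse — that is, one must ensure that empty or untouched blocks are never iterated over. This is handled by the lazy creation of child buckets keyed by the parent block, together with maintaining, for each row index, a direct pointer to its current block so that the lookup in the scan is $O(1)$.
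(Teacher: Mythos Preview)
Your proof is correct and follows essentially the same idea as the paper: a radix sort of the rows, refining the partition one column at a time in $O(r)$ per column. The paper phrases the same argument as a two-way recursion on the first column (split rows into $R_0$ and $R_1$ by the first bit, then recurse on each part with the remaining $c-1$ columns), which sidesteps the bookkeeping issue you flag since each recursive call only ever touches its own subset of rows; but the algorithm and the $O(rc)$ bound are the same.
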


\begin{proof}
Let $R_0$ and $R_1$ be the set of rows of $N$ whose first entry is $0$ and $1$, respectively.  We can determine $R_0$ and $R_1$ by reading the first column of $N$, which takes time $O (r)$.  We then recurse on $N_0'$ and $N_1'$, where $N_i'$ is the submatrix of $N$ induced by $R_i$ and the last $c-1$ columns of $N$.
\end{proof}

Before proving the next lemma we remark that, given a graph $H$ with $n$ vertices and $m$ edges, one can check whether $H$ is a cluster graph by checking that each of its components is a clique, which takes $\mathcal{O}(n^2)$ time.

\begin{lemma}\label{lem:costtime}
Let $G$ be an $n$-vertex, twin-free graph. In $\mathcal{O}(n^3)$ time, we can find an induced subgraph $H$ of $G$ and a cost function $c_H$ on $V(H)$ such that $(H,c_H)$ is $2$-good.
\end{lemma}

\begin{proof}
We fix any vertex $v_0\in V(G)$, and let $H=G[N_{\leqslant 2}[v_0]]$. We can check in $\mathcal{O}(n^2)$ time whether $H[N(v_0)]$ is chordal by using the algorithm from \cite{tarjan1984}. If $H[N(v_0)]$ is not chordal this algorithm returns, as a certificate, a hole $C$. By Lemma~\ref{lem:wheel}, $H[V(C)+v_0]$ is strongly 2-good and the corresponding function $c_H$ can be computed straightforwardly, hence we are done. Suppose now that $H[N(v_0)]$ is chordal. We can construct the clique tree of $H[N(v_0)]$ (see for instance \cite{tarjan1985decomposition}) in $\mathcal{O}(n^2)$ time. Each edge of the tree induces a separation of $H[N(v_0)]$, and we can check if each side is a cluster graph in $\mathcal{O}(n^2)$ time.  If neither side is a cluster graph, then we have found a $2P_3$ in $H[N(v_0)]$.  Hence,  $H[V(2P_3)+v_0]$ is strongly 2-good and the corresponding function $c_H$ can be computed straightforwardly.  Since the clique tree has at most $|H|\leq n$ vertices, by orienting its edges as in the proof of Lemma~\ref{lem:chordal+2p3free} we find, in $\mathcal{O}(n^3)$ time, a hitting clique. By applying Lemma \ref{lem:get_rid_of_true_twins} to get rid of twins, which can be done in $\mathcal{O}(n^2)$ time, we obtain a subgraph satisfying the hypotheses of Lemma \ref{lem:hittingclique2good}. Finally, by Lemma \ref{lem:hittingclique2good}, the cost function in the statement of Lemma \ref{lem:hittingclique2good} can be constructed in $\mathcal{O}(n^3)$ time. 
\end{proof}

\begin{lemma}\label{lem:analysis}
Algorithm 1 runs in $\mathcal{O}(n^4)$-time.
\end{lemma}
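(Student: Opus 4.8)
The plan is to bound the recursion tree of Algorithm~\ref{algo}. First I would observe that each recursive call either deletes a vertex (the zero-cost case and the true-twin case both pass to a graph with one fewer vertex), or, in the "find a $2$-good subgraph" branch, keeps the same vertex set but strictly decreases the number of strictly-positive-cost vertices: by the choice of $\lambda^*$ in Step~\ref{step:lambda_star}, at least one vertex $v \in V(H)$ has $c(v) - \lambda^* c_H(v) = 0$ in the new cost function $c'$, and since $c_H$ is nonnegative and $\lambda^* \geqslant 0$, no vertex that was already zero becomes positive. (One should note $\lambda^*$ is well-defined and positive: we are in the last branch, so there is no zero-cost vertex, and $c_H \not\equiv 0$, so the max is over a nonempty bounded set of positive reals.) Hence between two consecutive vertex-deletions there are at most $n$ calls of the last type, and there are at most $n$ deletions, giving a recursion tree that is in fact a path with $\mathcal{O}(n^2)$ nodes.

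Next I would bound the work done at a single node, excluding the recursive call. Checking whether $G$ is a cluster graph is $\mathcal{O}(n^2)$ by the remark before Lemma~\ref{lem:costtime}. Searching for a zero-cost vertex is $\mathcal{O}(n)$. Searching for a pair of true twins amounts to finding two equal rows in the modified adjacency matrix obtained by setting the diagonal to $1$, which by Lemma~\ref{lem:equalrows} takes $\mathcal{O}(n^2)$ time; updating costs and forming $G'$ is $\mathcal{O}(n^2)$. In the last branch, constructing $(H,c_H)$ is $\mathcal{O}(n^3)$ by Lemma~\ref{lem:costtime} (this is the dominant term), and computing $\lambda^*$ and the new cost function $c'$ is $\mathcal{O}(n)$. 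Also, reconstructing $X$ from $X'$ on the way back up — testing whether $X'$ is a hitting set of $G$, or whether it contains $u$ — costs $\mathcal{O}(n^2)$ per node. So every node costs $\mathcal{O}(n^3)$.

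Multiplying the $\mathcal{O}(n^2)$ bound on the number of nodes by the $\mathcal{O}(n^3)$ per-node cost would only give $\mathcal{O}(n^5)$, so the main obstacle is to shave one factor of $n$. The key refinement is that the expensive Step~\ref{step:find_good_H}, costing $\mathcal{O}(n^3)$, is executed only when the current graph is twin-free and has no zero-cost vertex; I would argue this happens at most $\mathcal{O}(n)$ times in total. Indeed, each such execution is immediately followed (going down the recursion) by a sequence of last-branch calls with the same vertex set, and the very first of these creates a zero-cost vertex; as shown above the number of positive-cost vertices then strictly decreases until either a vertex is deleted or the graph becomes a cluster. But in fact we can be cleaner: between any two executions of Step~\ref{step:find_good_H} the algorithm must delete at least one vertex — after Step~\ref{step:find_good_H} the graph is unchanged and has a new zero-cost vertex, so the next call takes the zero-cost branch (deleting that vertex, or recursing after confirming it need not be added), and the graph only returns to the last branch once it is again twin-free with no zero-cost vertex, which since costs only decrease requires that a vertex has been removed in between. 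Hence Step~\ref{step:find_good_H} runs $\mathcal{O}(n)$ times, contributing $\mathcal{O}(n^4)$ in total, while all other per-node work is $\mathcal{O}(n^2)$ and summed over the $\mathcal{O}(n^2)$ nodes also gives $\mathcal{O}(n^4)$. Therefore Algorithm~\ref{algo} runs in $\mathcal{O}(n^4)$ time. The one point requiring care in the write-up is the bookkeeping that ensures a vertex is genuinely deleted between successive invocations of Step~\ref{step:find_good_H}; I would make this rigorous by tracking the pair (number of vertices, number of positive-cost vertices) as a lexicographic potential that strictly decreases at every recursive call.
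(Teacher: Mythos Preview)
Your argument is correct and matches the paper's approach: the bottleneck is Step~\ref{step:find_good_H}, which costs $\mathcal{O}(n^3)$ and is executed at most $\mathcal{O}(n)$ times since each execution is immediately followed by a vertex deletion. The paper's proof is far terser---it simply writes $T(n) \leqslant T(n-1) + \mathcal{O}(n^3)$ without spelling out why the last branch can be absorbed into this recurrence---and note that your $\mathcal{O}(n^2)$ bound on the recursion depth is loose: since every branch-4 call is immediately followed by a branch-2 deletion, the total number of recursive calls is in fact $\mathcal{O}(n)$.
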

\begin{proof}
By Lemma~\ref{lem:equalrows}, finding all twins in $G$ can be done in time $\mathcal{O} (n^2)$.  Therefore, the most expensive recursive call of the algorithm is the construction of the $2$-good weighted induced subgraph $(H,c_H)$ from Lemma~\ref{lem:costtime}, which can be done in time $\mathcal{O}(n^3)$.  Therefore, the running-time $T(n)$ of the algorithm satisfies $T(n) \leqslant T(n-1) + \mathcal{O}(n^3)$, which gives $T(n) = \mathcal{O}(n^4)$.
\end{proof}

\begin{proof}[of Theorem~\ref{thm:2apx}]
The proof is identical to~\cite[Proof of Theorem~1, pages 365--366]{FJS20}, except that factor $9/4$ needs to be replaced everywhere by $2$. One easily proves by induction that the vertex set $X$ output by the algorithm on input $(G,c)$ is a minimal hitting set with $c(X) \leqslant 2 \OPT(G,c)$. We do not include more details here, and instead refer the reader to~\cite{FJS20}. Theorem~\ref{thm:localratio} guarantees that Algorithm~\ref{algo} runs in polynomial time.
\end{proof}

\section{Polyhedral results} \label{sec:polyhedral}
In this section we study how well LP-relaxtions can approximate \cVD{}, as already described in Section \ref{sec:intro}.
We begin with a brief description of the Sherali-Adams hierarchy~\cite{SA1990}, which is a standard procedure to obtain strengthened LP relaxations for binary linear programs.  For a more thorough introduction, we refer the reader to~\cite{laurent03}. Throughout the section we closely follow the exposition given in \cite{aprile2020simple}, where the Sherali-Adams hierarchy and a very similar concept of diagonal (defined below) are used to approach the FVST problem, as mentioned in Section \ref{sec:otherrelated}. In particular, our Lemma \ref{lem:diagonal} is similar to another lemma in \cite{aprile2020simple}.

Let $P =  \{ x \in \R^n \mid A x \geqslant b \}$ be a polytope contained in $[0,1]^n$ and $P_I := \mathrm{conv}(P \cap \Z^n)$.  For each $r \in \mathbb N$, we define a polytope $P \supseteq \SA_1(P) \supseteq \dots \supseteq \SA_r(P) \supseteq P_I$ as follows.  Let $N_r$ be the \emph{nonlinear} system obtained from $P$ by multiplying each constraint by $\prod_{i \in I} x_i \prod_{j \in J}(1-x_j)$ for all disjoint subsets $I, J$ of $[n]$ such that $1 \leqslant |I|+|J| \leqslant r$. Note that if $x_i \in \{0,1\}$, then $x_i^2=x_i$.  Therefore, we can obtain a \emph{linear} system $L_r$ from $N_r$ by setting $x_i^2:=x_i$ for all $i \in [n]$ and then $x_I:=\prod_{i \in I} x_i$ for all $I \subseteq [n]$ with $|I| \geqslant 2$.  We then let $\SA_r(P)$ be the projection of $L_r$ onto the variables $x_i$, $i \in [n]$.  

Let $\mathcal{P}_3(G)$ denote the collection of all vertex sets $\{u,v,w\}$ that induce a $P_3$ in $G$ and let $\SA_r(G):=\SA_r(P(G))$, where
$$
P(G) := \{ x \in [0,1]^{V(G)} \mid \forall \{u,v,w\} \in \mathcal{P}_3(G) : x_u + x_v + x_w \geqslant 1 \}\,.
$$
If a cost function $c : V(G) \to \R_{\geqslant 0}$ is provided, we let
$$
\SA_r(G,c) := \min \left\{\sum_{v \in V(G)} c(v) x_v \mid x \in \SA_r(G)\right\}
$$
denote the optimum value of the corresponding linear programming relaxation. For the sake of simplicity, we sometimes denote by $\SA_r(G,c)$ the above linear program itself.  

We say vertices $a$ and $b$ form a \emph{diagonal} if there are vertices $u,v$ such that $\{u,v,a\} \in \mathcal{P}_3(G)$ and $\{u,v,b\} \in \mathcal{P}_3(G)$. We say that a path \emph{contains a diagonal} if any of its pairs of vertices are diagonals. Note that a diagonal pair in a path need not be an edge in the path. 

Our first results concern $\SA_1(G)$. For later use, we list here the inequalities defining $\SA_1(G)$. For all $\{u,v,w\} \in \mathcal{P}_3(G)$ and $z \in V(G-u-v-w)$, we have the inequalities
\begin{align}
\label{ineq:SAtype1} x_u + x_v + x_w &\geqslant 1 + x_{uv} + x_{vw}\,,\\
\label{ineq:SAtype2} x_{uz} + x_{vz} + x_{wz} &\geqslant x_z \quad \text{and}\\
\label{ineq:SAtype3} x_u + x_v + x_w + x_{z} &\geqslant 1 + x_{uz} + x_{vz} + x_{wz}\,.
\end{align}
In addition, there are the inequalities
\begin{equation}
\label{ineq:SAbounds}
1 \geqslant x_{v} \geqslant x_{vu} \geqslant 0
\end{equation}
for all distinct $u, v \in V(G)$. The polytope $\SA_1(G)$ is the set of all $(x_v)$ such that there exists $(x_{uv})$ such that inequalities \eqref{ineq:SAtype1}--\eqref{ineq:SAbounds} are satisfied. Note that by definition, $x_{uv}$ and $x_{vu}$ are the same variable. 

In order to establish the integrality gap of $\SA_1(G)$, we need two preliminary lemmas.

\begin{lemma}\label{lem:diagonal}
Let $x \in \SA_1(G)$. If $G$ contains a $P_3$ which has a diagonal, then $x_v \geqslant 2/5$ for some vertex $v$ of $G$.
\end{lemma}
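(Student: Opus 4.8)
The plan is to argue by contradiction: suppose $x\in\SA_1(G)$ and $x_v<2/5$ for every vertex $v$ of the given $P_3$, as well as for the vertices witnessing the diagonal. Let me set up notation. We have a $P_3$ on vertices, say $\{a,b,c\}$ (in some order inducing the path), and a diagonal pair $\{p,q\}$ among its vertices, meaning there exist $u,v$ with $\{u,v,p\}\in\mathcal P_3(G)$ and $\{u,v,q\}\in\mathcal P_3(G)$. The first thing I would do is enumerate which pair of the three path-vertices can be the diagonal: since the diagonal need not be an edge of the path, the cases are essentially "the two endpoints of the $P_3$ are a diagonal" or "an endpoint and the center are a diagonal". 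In each case $u,v$ may or may not lie on the path, so I would first reduce to a small number of configurations on at most five vertices.

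The engine of the proof is the linear system \eqref{ineq:SAtype1}--\eqref{ineq:SAbounds}. The key inequality is \eqref{ineq:SAtype1} applied to the $P_3$'s: from $\{u,v,p\}$ we get $x_u+x_v+x_p\geqslant 1+x_{uv}+x_{vp}\geqslant 1+x_{uv}$, and similarly $x_u+x_v+x_q\geqslant 1+x_{uv}$ from $\{u,v,q\}$. I would also use \eqref{ineq:SAtype1} on the path $\{a,b,c\}$ itself: $x_a+x_b+x_c\geqslant 1+x_{ab}+x_{bc}$, together with the monotonicity bounds $x_{ij}\leqslant x_i$ from \eqref{ineq:SAbounds}. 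The idea is to add a suitable combination of these inequalities so that the "$+1$" terms accumulate while the right-hand side cross-terms $x_{uv}$, $x_{uz}$, etc.\ can be bounded below by $0$ or absorbed, yielding a lower bound of the form $(\text{const})\geqslant (\text{number of }1\text{'s})$ on a sum of at most five $x$-variables; if each such variable is $<2/5$, the sum is too small and we reach a contradiction. Concretely, for the "diagonal = endpoints of the $P_3$" case, $p$ and $q$ are themselves two of $\{a,b,c\}$, so the $u,v$ from the diagonal definition plus $a,b,c$ give us (at most) five vertices, and I expect to need inequality \eqref{ineq:SAtype3} (the four-variable one, taking $z$ to be one of the path vertices) to relate the cross-terms $x_{uz},x_{vz}$ back to the single-variable $x$'s and make the bookkeeping close at the threshold $2/5$.

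The main obstacle I anticipate is the case analysis: the diagonal vertices $u,v$ need not be on the $P_3$, they might coincide with path vertices in various ways, and edges between $u,v$ and $a,b,c$ are not controlled, so several of the Sherali-Adams inequalities \eqref{ineq:SAtype1}--\eqref{ineq:SAtype3} are available or not depending on which triples actually induce a $P_3$. I would handle this by always falling back on the weakest usable information --- namely \eqref{ineq:SAtype1} for the two $P_3$'s guaranteed by the diagonal definition, \eqref{ineq:SAtype1} for the path, and \eqref{ineq:SAbounds} --- and checking that even this suffices. The arithmetic target is clean: five variables each below $2/5$ sum to less than $2$, so if I can force the sum of (at most) five single-variable $x$'s to be at least $2$, I win; getting exactly the constant $2$ (rather than something weaker) out of the combination is where the tuning of coefficients, and possibly the use of \eqref{ineq:SAtype3}, will matter. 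Once the contradiction is derived in each configuration, the lemma follows.
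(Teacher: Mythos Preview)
Your plan has a genuine gap: the inequality you leave out, namely \eqref{ineq:SAtype2}, is precisely the one that makes the argument work, and without it your target ``five singletons sum to at least $2$'' is unreachable from \eqref{ineq:SAtype1}, \eqref{ineq:SAtype3}, \eqref{ineq:SAbounds} alone. To see this, set every singleton $x_w = 2/5 - \eps$ and every pair variable $x_{ww'} = 0$. This assignment satisfies every instance of \eqref{ineq:SAtype1}, \eqref{ineq:SAtype3} and \eqref{ineq:SAbounds} (the left-hand sides are $6/5-3\eps$ and $8/5-4\eps$ while all pair terms on the right vanish), yet the five singletons sum to $2-5\eps<2$. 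So no nonnegative combination of those constraints can force your bound, in any of the configurations you propose to enumerate. The only listed inequality violated by this assignment is \eqref{ineq:SAtype2}, which here reads $0 \geqslant 2/5 - \eps$.

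The paper uses \eqref{ineq:SAtype2} in one clean step and needs no case analysis at all. Let $a,b$ be the diagonal pair inside the $P_3$, with witnesses $u,v$. Apply \eqref{ineq:SAtype1} to $\{u,v,a\}$ (pivoting at $a$) to get $x_a+x_u+x_v \geqslant 1+x_{au}+x_{av}$, and apply \eqref{ineq:SAtype2} to $\{u,v,b\}$ with $z=a$ to get $x_{au}+x_{av}+x_{ab}\geqslant x_a$. Adding these two lines cancels $x_a$, $x_{au}$, $x_{av}$ and leaves
\[
x_u+x_v+x_{ab}\ \geqslant\ 1,
\]
so if $x_u,x_v<2/5$ then $x_{ab}>1/5$. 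Now \eqref{ineq:SAtype1} on the path $\{a,b,c\}$ (with any pivot touching $ab$) gives $x_a+x_b+x_c\geqslant 1+x_{ab}>6/5$, forcing some vertex of the $P_3$ to have value at least $2/5$. This works uniformly: it does not depend on which pair is the diagonal, which vertex is the middle of any of the three $P_3$'s, or whether $u,v$ happen to coincide with $c$.
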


\begin{proof}
Assume by contradiction that $a$, $b$ form a diagonal and all components of $x$ are less than 2/5. By the definition of diagonal, there exist $u,v \in V(G)$ with $\{u,v,a\}, \{u,v,b\} \in \mathcal{P}_3(G)$. In particular, from \eqref{ineq:SAtype1} we have $x_a + x_u + x_v \geqslant 1 + x_{au} + x_{av}$ and from \eqref{ineq:SAtype2} $x_{ab} + x_{au} + x_{av} \geqslant x_a$. 

Adding these two inequalities, we obtain $x_u + x_v + x_{ab} \geqslant 1$. We must have $x_{ab} \geqslant 1/5$ since otherwise $\max(x_u,x_v) \geqslant 2/5$. Now let $c$ be the third vertex of the $P_3$ containing $a, b$ (notice that $c$ can be the middle vertex of the $P_3$). By \eqref{ineq:SAtype1} and \eqref{ineq:SAbounds}, we have
$x_a + x_b + x_c \geqslant 1 + x_{ab} + x_{ac} \geqslant 6/5$ which means that $\max(x_a,x_b,x_c) \geqslant 2/5$, a contradiction which concludes the proof.
\end{proof}

 \begin{lemma}\label{lem:path-cycles}
 Let $G$ be a path or a cycle, and $c : V(G) \to \R_{\geqslant 0}$. Then:
 \begin{enumerate}[(i)]
     \item \label{it:path1} there is an efficient algorithm that solves \cVD{} for $(G,c)$;
     \item \label{it:path2} the basic LP $\SA_0(G)$ has integrality gap at most $2$.
 \end{enumerate}
 \end{lemma}
 \begin{proof}
 First, let $G$ be a path. We notice that the coefficient matrix of the basic LP is totally unimodular, by the consecutive ones property \cite{schrijver98}. Hence solving the LP yields an integral optimal solution, which corresponds to a hitting set $X$ of $G$ of cost equal to $\OPT(G,c)=\SA_0(G,c)$. This proves \eqref{it:path1}, \eqref{it:path2} when $G$ is a path.
 
 Now, let $G$ be a cycle, and let $v\in V(G)$. Suppose that $v$ belongs to a minimum cost hitting set $X$ of $G$. Then $X\setminus \{v\}$ is a minimum cost hitting set of $G-v$, hence it can be found efficiently since $G-v$ is a path. By iterating over all $v\in V(G)$ and taking the hitting set of minimum cost, we efficiently solve \cVD{} for $(G,c)$. This concludes the proof of \eqref{it:path1}.
 
 Finally, let $\tilde{x}$ be an optimal solution of $\SA_0(G,c)$. First, assume that there is some $v \in V(G)$ such that $\tilde{x}_v \geqslant 1/2$. Since $G - v$ is a path, the optimal hitting set $X'$ in $G-v$ has cost $c(X') \leqslant  \sum_{u \neq v} c(u)\tilde{x}_u$. Hence, we see that $X:= X' + v$ is a hitting set of $H$ with $c(X) = c(v) + c(X') \leqslant c(v) + \sum_{u \neq v}c(u) \tilde{x}_u \leqslant 2\sum_{u} c(u) \tilde{x}_u = 2\SA_0(G,c)$.
On the other hand, if $\tilde{x}_v < 1/2$ for all vertices $v\in V(G)$, then the constraint $\tilde{x}_u + \tilde{x}_v + \tilde{x}_w \geqslant 1$  (where $u, w$ are the neighbors of $v$) implies that there can be no vertex $v$ with $\tilde{x}_v = 0$. So $0 < \tilde{x}_v < 1/2$ for all $v \in V(G)$. 
Therefore, extreme point $\tilde{x}$ is the unique solution of $|G|$ equations of the form $x_u + x_v + x_w = 1$ for $\{u,v,w\} \in \mathcal{P}_3(G)$. Hence $\tilde{x}_v = 1/3$ for all vertices. Thus $\SA_0(G,c) = 1/3 \cdot c(V(G))$. Now notice that since $G$ is a cycle we can partition its vertices into two disjoint hitting sets $X$ and $Y$. Without loss of generality assume that $c(X) \leqslant 1/2 \cdot c(V(G))$. Then $c(X) \leqslant 3/2 \cdot \SA_0(G,c)$. This concludes the proof of \eqref{it:path2}.
 \end{proof}


%



\begin{theorem}\label{thm:SA1CVD_UB}
There is a polynomial-time algorithm that, given a graph $G$ and $c : V(G) \to \R_{\geqslant 0}$, outputs a hitting set $X$ of $G$ such that $c(X) \leqslant 5/2 \cdot \SA_1(G,c)$. In particular, the integrality gap of $\SA_1(G)$ is at most $5/2$. 
\end{theorem}

\begin{proof}
Let $\bar{x}$ be an optimal solution of $\SA_1(G,c)$.
Let $U=\{v\in V(G): \bar{x}_v\geq 2/5\}$, and $H=G\setminus U$. Notice that the restriction of $\bar{x}$ to $V(H)$ is a feasible solution for $\SA_1(H,c)$ whose components are all strictly less than $2/5$. Hence, by Lemma \ref{lem:diagonal}, $H$ cannot contain a $P_3$ which has a diagonal. We will now show that, after possibly getting rid of twins, $H$ is a disjoint union of paths and cycles. Then, by applying Lemma \ref{lem:path-cycles}, we will obtain a minimum cost hitting set of $H$, which, together with $U$, will form our desired hitting set.

First, if $H$ contains twins $u$ and $v$, we can delete $v$, and set $c'(u) := c(u)+c(v)$, $c'(w) := c(w)$ for $w \in V(H) \setminus \{u,v\}$ to obtain a smaller weighted graph $(H',c')$. We claim that $\SA_1(H',c') \leqslant \SA_1(H,c)$.
To see this, we show that one can turn any feasible solution $x$ of $\SA_1(H,c)$ into a feasible solution of $\SA_1(H',c')$ without increasing the cost. Let
$x'_u := \min (x_u,x_v)$ and $x'_w := x_w$ for $w \neq u,v$. It is easy to check that this defines a feasible solution $x'$ to $\SA_1(H',c')$, whose cost is
$$
\sum_{w \neq v} c'(w) x'_w = 
(c(u) + c(v)) \min(x_u,x_v) + \sum_{w \neq u,v} c(w) x_w 
\leqslant \sum_w c(w) x_w\,.
$$
This proves the claim. Moreover, a hitting set $X$ for $H$ can be immediately obtained from a hitting set $X'$ of $H'$ by adding $v$ if and only if $u\in X'$. Finally, notice that there is a feasible solution for $\SA_1(H',c')$ obtained from the restriction of $\bar{x}$ whose components are all strictly less than $2/5$. Hence, from now on we assume that $H$ is twin-free. 

Now, we claim that $H$ is triangle-free and claw-free. Suppose that $H$ contains a triangle with vertices $u$, $v$ and $w$. Since $H$ is twin-free, every edge of the triangle has a distinguisher. Without loss of generality, $\mathcal{P}_3(H)$ contains $\{u,v,y\}$, $\{u,w,y\}$, $\{v,w,z\}$ and $\{u,v,z\}$ where $y, z$ are distinct vertices outside the triangle. It is easy to see that, for instance, edge $uw$ is a diagonal contained in a $P_3$, a contradiction. A similar argument shows that $H$ cannot contain a claw.
This proves the claim, and implies that $H$ has maximum degree at most 2.  That is, $H$ is a disjoint union of paths and cycles. By part \eqref{it:path1} of Lemma \ref{lem:path-cycles}, (applied to each component of $H$), we obtain a minimum cost hitting set $X$ of $H$ which, thanks to \eqref{it:path2} of Lemma \ref{lem:path-cycles}, satisfies $c(X)\leq 2\SA_0(H,c)\leq \frac{5}{2}\SA_1(H,c)$.

Finally, consider $X\cup U$. Clearly, it is a hitting set of $G$. Moreover, one has 
\[c(X\cup U)=c(X)+c(U)\leqslant \frac{5}{2}\left( \SA_1(H,c) + \frac{2}{5}c(U) \right )\leqslant \]
\[
\frac{5}{2}  \left( \SA_1(G,c) -\sum_{u\in U}c(u)\bar{x}_u + \frac{2}{5}c(U) \right ) \leqslant \frac{5}{2} \cdot \SA_1(G,c).
\]

It follows that the integrality gap of $\SA_1(G)$ is at most $5/2$, concluding the proof.
\end{proof}

We now complement the result above by showing a lower bound on the integrality gap of $\SA_1(G,c)$.

\begin{theorem} \label{thm:SA1CVD_LB}
For every $\eps > 0$ there is some instance $(G,c)$ of \cVD{} such that $\OPT(G,c) \geqslant (5/2 - \eps) \SA_1(G,c)$.
\end{theorem}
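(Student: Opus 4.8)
The plan is to construct, for every $\eps>0$, a unit-cost instance $(G,\mathbf{1}_G)$ whose integrality gap for $\SA_1$ is at least $5/2-\eps$. We take $G$ to be an $n$-vertex graph of girth at least $5$ whose independence number satisfies $\alpha(G)\le\frac{\eps}{5}n$; the existence of such $G$ is addressed at the end. The lower bound on $\OPT$ is the easy half. Since $G$ is triangle-free, an induced subgraph of $G$ is a cluster graph if and only if it has maximum degree at most $1$, so $\OPT(G,\mathbf{1}_G)=n-\mathrm{diss}(G)$, where $\mathrm{diss}(G)$ denotes the largest size of a vertex set inducing a subgraph of maximum degree at most $1$. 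Any such set is a disjoint union of edges and isolated vertices, hence contains an independent set of at least half its size, so $\mathrm{diss}(G)\le 2\alpha(G)\le\frac{2\eps}{5}n$ and $\OPT(G,\mathbf{1}_G)\ge\bigl(1-\frac{2\eps}{5}\bigr)n$. Combined with the bound $\SA_1(G,\mathbf{1}_G)\le\frac25n$ established below, this yields integrality gap at least $\bigl(1-\frac{2\eps}{5}\bigr)n\big/\bigl(\frac25n\bigr)=\frac52-\eps$, which is exactly the claim.

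The heart of the argument is $\SA_1(G,\mathbf{1}_G)\le\frac25n$, which we prove by exhibiting an explicit feasible point of $\SA_1(G)$: put $x_v:=\frac25$ for all $v\in V(G)$, $x_{uv}:=0$ for every edge $uv$, and $x_{uv}:=\frac15$ for every non-edge $uv$; its objective value is $\frac25n$. We then check the inequalities \eqref{ineq:SAtype1}--\eqref{ineq:SAbounds}. The bound inequalities \eqref{ineq:SAbounds} are immediate. Each instance of \eqref{ineq:SAtype1} (including the variants obtained by multiplying a $P_3$-inequality by $1-x_u$, $1-x_v$, or $1-x_w$) has right-hand side $1+x_{ab}+x_{cd}$ for two vertex pairs $ab,cd$ of the underlying $P_3$; since a $P_3$ has exactly one non-edge, at least one of these pairs is an edge, so the right-hand side is at most $1+0+\frac15=\frac65$, which equals the left-hand side. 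The crucial inequalities are \eqref{ineq:SAtype2} and \eqref{ineq:SAtype3}: because $G$ has girth at least $5$, no vertex $z\notin\{u,v,w\}$ is adjacent to two or more vertices of a $P_3$ $\{u,v,w\}$ (two such edges would create a triangle or a $4$-cycle). Hence among $x_{uz},x_{vz},x_{wz}$ at least two equal $\frac15$ and none exceeds $\frac15$, so their sum lies in $\{\frac25,\frac35\}\subseteq[\frac25,\frac35]$; since $x_z=\frac25$ and $x_u+x_v+x_w+x_z-1=\frac35$, this is precisely what \eqref{ineq:SAtype2} and \eqref{ineq:SAtype3} demand. This proves feasibility, hence $\SA_1(G,\mathbf{1}_G)\le\frac25n$.

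Finally we need that graphs of girth at least $5$ with $\alpha(G)\le\frac{\eps}{5}|V(G)|$ exist; this is the only ingredient that is not a direct computation, though it is standard. One clean route: take a random $d$-regular graph on $n$ vertices with $d$ a large constant. With probability bounded away from $0$ it contains only a constant number of triangles and $4$-cycles, while with high probability $\alpha\le C\frac{\ln d}{d}n$ for an absolute constant $C$; deleting one vertex from each triangle and each $4$-cycle gives a graph $G'$ of girth at least $5$ on $n-O(1)$ vertices with $\alpha(G')\le\alpha(G)\le C\frac{\ln d}{d}n$, which is at most $\frac{\eps}{5}|V(G')|$ once $d$ is chosen so that $C\ln d/d<\eps/5$ and $n$ is large. (Alternatively one may delete the $o(n)$ short cycles of an Erd\H{o}s--R\'enyi graph $G(n,n^{-4/5})$, or quote an explicit family of girth-$5$ graphs with sublinear independence number.) The two points to be careful about — and what makes the bound come out to exactly $5/2$ rather than something larger — are: setting the edge variables to $0$ so that \eqref{ineq:SAtype1} does not over-constrain the non-edge variables, and using girth at least $5$ to force every outside vertex to touch at most one vertex of each $P_3$, which is exactly what confines $x_{uz}+x_{vz}+x_{wz}$ to $[\frac25,\frac35]$.
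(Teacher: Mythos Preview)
Your proof is correct and follows essentially the same approach as the paper: take a unit-cost graph of girth at least $5$ with small independence number, bound $\OPT$ below via the dissociation number, and exhibit the same fractional solution $x_v=2/5$, $x_{uv}=0$ on edges, $x_{uv}=1/5$ on non-edges, using girth $\geqslant 5$ to guarantee that any external vertex $z$ has at most one neighbour in a $P_3$. The only cosmetic difference is that the paper couples the girth and independence-number thresholds through a single parameter $k\geqslant 5/\eps$, whereas you (equivalently, and slightly more economically) fix the girth at $5$ and push $\alpha(G)\leqslant \eps n/5$ directly.
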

\begin{proof}
We show there is a graph $G$ for which every hitting set $X$ has $c(X) \geqslant (5/2 - \eps) \SA_1(G,c)$ for $c := \mathbf{1}_G$. Let $G$ be a graph whose girth is at least $k$ for some constant $k \geqslant 5$ and with the independence number $\alpha(G) \leqslant n/k$ where $n := |G|$. It can be shown via the probabilistic method that such a $G$ exists, see \cite{alonspencer2016}. Set $c(v) := 1$ for all $v \in V(G)$. We have $c(X) \geqslant n(1-2/k)$ for every hitting set $X$. To see this observe that since $G$ is triangle-free and $\alpha(G) \leqslant n/k$, when we remove $X$ we will get at most $n/k$ components each of size at most 2. 
Thus there are at most $2n/k$ vertices in $G-X$, so $|X| \geqslant n-2n/k$. Therefore, $\OPT(G,c) \geqslant (1-2/k) n$.

In order to show $\SA_1(G,c) \leqslant 2n/5$, we construct the following feasible solution $x$ to $\SA_1(G,c)$. Set $x_{v} := 2/5$ for all $v \in V(G)$ and $x_{vw} := 0$ if $vw \in E(G)$ and $x_{vw} := 1/5$ if $vw \notin E(G)$. The inequalities defining $\SA_1(G)$ are all satisfied by $x$. This is obvious for inequalities \eqref{ineq:SAtype1}, \eqref{ineq:SAtype3} and \eqref{ineq:SAbounds}. For inequality \eqref{ineq:SAtype2}, notice that at most one of $uz$, $vz$, $wz$ can be an edge of $G$, since otherwise $G$ would have a cycle of length at most $4$. Thus \eqref{ineq:SAtype2} is satisfied too, $x \in \SA_1(G)$ and $\SA_1(G,c) \leqslant 2n/5$.

This completes the proof since, by taking $k \geqslant 5/\eps$, we have $\OPT(G,c) \geqslant n(1-2/k) \geqslant (5/2-\eps) 2n/5 \geqslant (5/2-\eps) \SA_1(G,c)$.
\end{proof}

We now show that the integrality gap decreases to $2+\eps$ after applying $\mathrm{poly}(1/\eps)$ rounds of Sherali-Adams. We first need the following lemma.

\begin{lemma}\label{lem:itrounding}
Fix $\alpha \geqslant 1$ and $r \in \Z_{\geqslant 0}$. Let $(G,c)$ be a minimum order weighted graph such that $\OPT(G,c) > \alpha \cdot \SA_r(G,c)$. The following two assertions hold:
\begin{enumerate}[(i)]
\item if $x$ is an optimal solution to $\SA_r(G,c)$, then $x_v < 1/\alpha$ for all $v \in V(G)$;
\item $G$ is connected and twin-free.
\end{enumerate}
\end{lemma}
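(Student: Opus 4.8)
The plan is to exploit minimality of $(G,c)$ in the standard "minimal counterexample" fashion. For part (i), suppose for contradiction that some optimal solution $x$ to $\SA_r(G,c)$ has a coordinate $x_{v_0} \geqslant 1/\alpha$. I would like to delete $v_0$ and pass to the smaller weighted graph $(G-v_0, c')$ with $c'$ the restriction of $c$; the intuition is that putting $v_0$ into the hitting set "for free" costs at most $c(v_0) \leqslant \alpha \cdot c(v_0) x_{v_0} $, which can be charged against the LP. Concretely, let $x'$ be the restriction of $x$ (and of the lifted variables) to $G-v_0$; since deleting a vertex only removes $P_3$-constraints, $x'$ is feasible for $\SA_r(G-v_0)$, so $\SA_r(G-v_0,c') \leqslant \sum_{v \neq v_0} c(v) x_v = \SA_r(G,c) - c(v_0) x_{v_0}$. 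On the other hand any hitting set $Y$ of $G-v_0$ yields a hitting set $Y \cup \{v_0\}$ of $G$, so $\OPT(G,c) \leqslant \OPT(G-v_0,c') + c(v_0)$. Combining, and using that $(G-v_0,c')$ is \emph{not} a counterexample by minimality (so $\OPT(G-v_0,c') \leqslant \alpha \cdot \SA_r(G-v_0,c')$), one gets
\begin{align*}
\OPT(G,c) &\leqslant \alpha \cdot \SA_r(G-v_0,c') + c(v_0) \\
&\leqslant \alpha\bigl(\SA_r(G,c) - c(v_0)x_{v_0}\bigr) + c(v_0) \leqslant \alpha \cdot \SA_r(G,c),
\end{align*}
the last step using $x_{v_0} \geqslant 1/\alpha$. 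This contradicts that $(G,c)$ is a counterexample, proving (i).

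For part (ii), I would again argue by contradiction using minimality, but now I must be slightly careful because deleting vertices can only decrease $\OPT$ and $\SA_r$, so I need to split the optimum across components. If $G$ is disconnected with components $G_1,\dots,G_t$ (restricting $c$ accordingly), then $\OPT(G,c) = \sum_i \OPT(G_i,c_i)$ and, since the $\SA_r$ relaxation decomposes over connected components (the $P_3$-constraints and all lifted constraints only involve vertices within a single component), $\SA_r(G,c) = \sum_i \SA_r(G_i,c_i)$. Each $(G_i,c_i)$ has strictly fewer vertices, hence by minimality $\OPT(G_i,c_i) \leqslant \alpha \cdot \SA_r(G_i,c_i)$; summing contradicts $\OPT(G,c) > \alpha \cdot \SA_r(G,c)$. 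For twin-freeness, suppose $u,u'$ are true twins. Here the natural move mirrors Steps~\ref{step:true_twins_start}--\ref{step:true_twins_end} of the algorithm: delete $u'$ and set $c'(u) := c(u)+c(u')$, keeping other costs. One checks $\OPT(G-u',c') = \OPT(G,c)$ (an optimal hitting set of $G$ can be assumed to contain both or neither of $u,u'$, since $N[u]=N[u']$) and $\SA_r(G-u',c') = \SA_r(G,c)$ (given an optimal $x$ on $G-u'$, extend by copying all values involving $u$ to the corresponding values involving $u'$; feasibility follows because $u$ and $u'$ play symmetric roles in every $P_3$; conversely project). Then minimality applied to $(G-u',c')$ gives $\OPT(G,c) = \OPT(G-u',c') \leqslant \alpha\cdot\SA_r(G-u',c') = \alpha\cdot\SA_r(G,c)$, a contradiction.

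The only slightly delicate points are the two "$\SA_r$ is preserved" claims: that the Sherali-Adams relaxation decomposes over connected components, and that it is insensitive to duplicating a true twin. Both are intuitively clear but deserve a careful check at the level of the lifted variables $x_I$ — one must verify that the extended/restricted assignment of all monomials $x_I$, $|I| \leqslant r+1$, satisfies every inequality in the system $L_r$. I expect this bookkeeping to be the main (though routine) obstacle; everything else is a direct charging argument off the minimality hypothesis. I would present part (i) first since it is used (via "$x_v < 1/\alpha$") implicitly in keeping the later arguments clean, then part (ii).
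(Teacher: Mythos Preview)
Your plan is essentially the paper's own proof: part~(i) is identical, and part~(ii) uses the same component argument and the same twin-merging reduction. Two small points are worth flagging.

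First, in the twin-free argument you claim $\SA_r(G-u',c') = \SA_r(G,c)$ and justify the direction $\SA_r(G-u',c') \leqslant \SA_r(G,c)$ by ``conversely project''. Projection (restriction) does give a \emph{feasible} point of $\SA_r(G-u')$, but its cost under $c'$ is $(c(u)+c(u'))\,x_u + \sum_{w\neq u,u'} c(w)x_w$, which exceeds $\SA_r(G,c)$ whenever $x_u > x_{u'}$. So the issue is not feasibility of the lifted variables, as you suspected, but cost. The fix is exactly what the paper does: use the twin symmetry to assume $x_u \leqslant x_{u'}$ (equivalently, set $x'_u := \min(x_u,x_{u'})$ before projecting). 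With that adjustment the inequality goes through, and in fact only this one inequality $\SA_r(G-u',c') \leqslant \SA_r(G,c)$ is needed, not equality.

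Second, for connectedness you also assert equality $\SA_r(G,c) = \sum_i \SA_r(G_i,c_i)$; again only the inequality $\sum_i \SA_r(G_i,c_i) \leqslant \SA_r(G,c)$ is required, and this one follows immediately by restricting an optimal solution to each component (no need to argue that the lifted polytope factors as a product). The paper phrases this as ``some component must already be a counterexample'', which avoids the decomposition claim altogether.
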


\begin{proof}
(i) Suppose for a contradiction that $x_v \geqslant 1/\alpha$, for some $v \in V(G)$. Note that $x$ restricted to $V(G) \setminus \{v\}$ is a feasible solution to $\SA_r(G-v,c)$. Thus $\SA_r(G-v,c) \leqslant \SA_r(G,c) - c(v)x_v$. By the minimality of $G$, there is a hitting set $X'$ of $G-v$ such that $c(X') \leqslant \alpha \cdot \SA_r(G-v,c)$. Therefore $X:= X' + v$ is a hitting set of $G$ with $c(X) = c(v) + c(X') \leqslant c(v) + \alpha \cdot \SA_r(G-v,c) \leqslant \alpha \cdot c(v) x_v + \alpha \cdot \SA_r(G-v,c) \leqslant \alpha \cdot \SA_r(G,c)$, a contradiction.

(ii) Note that $G$ is connected, otherwise there exists a connected component $H$ of $G$ such that $\OPT(H,c_H) > \alpha \cdot \SA_r(H,c_H)$, where $c_H$ is the restriction of $c$ to $V(H)$, contradicting the minimality of $G$.
To show that $G$ is twin-free, we proceed exactly as in the proof of Theorem \ref{thm:SA1CVD_UB}. 
\end{proof}


 \begin{theorem} \label{thm:SA_2+eps}
 For every fixed $\eps > 0$, performing $r = \mathrm{poly}(1/\eps)$ rounds of the Sherali-Adams hierarchy produces an LP relaxation of \cVD{} whose integrality gap is at most $2+\eps$. That is, $\OPT(G,c) \leqslant (2+\eps) \SA_r(G,c)$ for all weighted graphs $(G,c)$.
 \end{theorem}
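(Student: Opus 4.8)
Fix $\alpha := 2+\eps$ and think of $r = \mathrm{poly}(1/\eps)$ as a parameter to be tuned at the end. The plan is to argue by contradiction on a minimum‑order counterexample, in the spirit of Lemmas~\ref{lem:itrounding} and~\ref{lem:triangle_claw}, using two facts about Sherali--Adams: \emph{(a)} it is monotone under induced subgraphs (if $x\in\SA_r(G)$ then the restriction of $x$ to $V(H)$, together with the corresponding product variables, lies in $\SA_r(H)$ for every induced subgraph $H$, since $\mathcal P_3(H)\subseteq\mathcal P_3(G)$); and \emph{(b)} for every $S\subseteq V(G)$ with $|S|\leqslant r$, the level‑$r$ solution yields a genuine probability distribution on the hitting sets of $G[S]$ whose marginals are $x|_S$, so $x|_S$ lies in $\mathrm{conv}\{\chi^X : X \text{ a hitting set of } G[S]\}$. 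So let $(G,c)$ be a counterexample of minimum order, i.e.\ $\OPT(G,c) > \alpha\cdot\SA_r(G,c)$; by Lemma~\ref{lem:itrounding}, $G$ is connected and twin‑free, and every optimal solution $x^*$ of $\SA_r(G,c)$ satisfies $x^*_v < 1/\alpha$ for all $v$.

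The first step is to use fact~\emph{(b)} to restrict the structure of $G$: whenever $G$ contains a small induced subgraph $F$ (on at most $r$ vertices) such that every point of $\mathrm{conv}\{\chi^X: X \text{ a hitting set of } F\}$ has a coordinate at least $1/\alpha = \tfrac12-\Theta(\eps)$, we contradict $x^*_v<1/\alpha$. Applying this to suitable bounded‑size gadgets --- for instance a triangle together with distinguishers for its edges (which exist because $G$ is twin‑free), a claw, small wheels $W_j$ with $j\leqslant r$, and the graph obtained from $2P_3$ by adding a universal vertex (which has only $7$ vertices) --- together with iterated diagonal‑type estimates as in Lemma~\ref{lem:diagonal}, I would reduce to a situation in which there is a vertex $v_0$ for which Theorem~\ref{thm:localratio} outputs a weighted subgraph $(H,c_H)$ that is \emph{simple}, in the sense that its integrality gap after $r$ rounds is $1+O(1/r)$. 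The cleanest such case is when $N(v_0)$ induces a stable set: then $H:=G[N_{\leqslant 2}[v_0]]$ carries the cost function supported on $N[v_0]$ with $c_H(v_0)=d(v_0)-1$ and $c_H\equiv 1$ on $N(v_0)$ (the ``$k=3$'' case of Theorem~\ref{thm:localratio}), and $H[N[v_0]]$ is the star $K_{1,d}$ with $d:=d(v_0)$.

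Now run one local‑ratio step at $v_0$. Set $\lambda^* := \min_{v\in N[v_0]} c(v)/c_H(v)$ and $c':=c-\lambda^* c_H\geqslant 0$; some $v^*\in N[v_0]$ has $c'(v^*)=0$. Since $|G-v^*|<|G|$, minimality of $(G,c)$ gives a hitting set $X$ of $G-v^*$ with $c'(X)\leqslant\alpha\cdot\SA_r(G-v^*,c')\leqslant\alpha\cdot\SA_r(G,c')$ (using monotonicity and $c'(v^*)=0$); extending $X$ to a minimal hitting set $\bar X\subseteq X\cup\{v^*\}$ of $G$ only decreases its $c'$‑cost, and $2$‑goodness of $(H,c_H)$ gives $c_H(\bar X\cap V(H))\leqslant 2\OPT(H,c_H)$. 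Hence
\begin{equation*}
\OPT(G,c)\;\leqslant\; c(\bar X)\;=\;c'(\bar X)+\lambda^* c_H(\bar X\cap V(H))\;\leqslant\;\alpha\cdot\SA_r(G,c')+2\lambda^*\OPT(H,c_H).
\end{equation*}
On the other hand, splitting the objective at $x^*$ and using superadditivity of the minimum together with monotonicity, $\SA_r(G,c)\geqslant\SA_r(G,c')+\lambda^*\SA_r(H,c_H)$. Combining the two displays, a contradiction follows as soon as $2\OPT(H,c_H)\leqslant\alpha\cdot\SA_r(H,c_H)$. By monotonicity this in turn reduces to the same inequality for the star $K_{1,d}$, where $\OPT(K_{1,d},c_H)=d-1$, and for any $x\in\SA_r(K_{1,d})$ one can, for each $(r{-}1)$‑subset $T$ of leaves, apply fact~\emph{(b)} to $T\cup\{v_0\}$ (a star on $\leqslant r$ vertices, whose hitting sets either contain $v_0$ or all but one leaf) to get $\sum_{i\in T}x_i\geqslant(1-x_{v_0})(|T|-1)$; averaging over $T$ yields $\sum_{\text{leaves}}x_i\geqslant d(1-x_{v_0})\tfrac{r-2}{r-1}$, whence $\SA_r(K_{1,d},c_H)\geqslant(d-1)\tfrac{r-2}{r-1}$ and so $\OPT/\SA_r\leqslant 1+\tfrac{1}{r-2}\leqslant 1+\eps/2$ once $r\geqslant 2+2/\eps$. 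That closes the easy case; the remaining cases of Theorem~\ref{thm:localratio} that survive the structural step (small wheels, $2P_3$‑plus‑universal, and the hitting‑clique construction on a chordal $2P_3$‑free neighborhood) should be dealt with similarly, each being either of bounded size or ``tree‑like'' enough that a few Sherali--Adams rounds make the gap $1+O(\eps)$.

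I expect the structural reduction of the second paragraph to be the main obstacle. Deciding exactly which bounded‑size gadgets to forbid, verifying that their integer hulls force a coordinate above $1/\alpha=\tfrac12-\Theta(\eps)$ (note that a claw only forces $2/5<1/\alpha$, so gadgets must be enlarged, and the number of rounds grows --- this is where the $\mathrm{poly}(1/\eps)$ is spent), and handling the genuinely unbounded case of a \emph{long induced hole inside a neighborhood}: for a large wheel $W_k$ the natural $2$‑good cost function of Lemma~\ref{lem:wheel} has Sherali--Adams integrality gap bounded away from $1$ (the fractional point $x\equiv 1/3$ essentially survives), so such a $v_0$ \emph{cannot} be used in the local‑ratio step above and the structural analysis must either exclude long holes from all neighborhoods of a minimum counterexample or replace the single local‑ratio step by a recursion that peels the wheel's centre (or a rim vertex) and exploits that cycles themselves have small Sherali--Adams gap for \cVD.
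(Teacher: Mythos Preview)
Your skeleton matches the paper's: minimum counterexample, Lemma~\ref{lem:itrounding} to force $x^*_v<1/(2+\eps)$ and twin-freeness, then use fact~\emph{(b)} to forbid small gadgets, then a single local-ratio step at some $v_0$. But the paper closes the two gaps you flag by a cleaner route than the case analysis you envisage.

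First, the large-wheel obstacle you identify is real and is handled explicitly. For a hole $C\subseteq N(v_0)$ with $|C|\geqslant r$, the paper does \emph{not} use the wheel cost function directly. Instead it takes every $r$-vertex fan $F\subseteq V(C)\cup\{v_0\}$ (the apex plus a sub-path of $C$), applies fact~\emph{(b)} to get the exact fan inequality $\sum_v c_F(v)x_v\geqslant r-3$ with $c_F(v_0)=r-3-\lfloor(r-1)/3\rfloor$, and \emph{averages} these over all rotations. The averaged inequality dominates the wheel inequality scaled by $2/(2+\eps)$, which still forces some $x_a\geqslant 1/(2+\eps)$. So chordality of every $G[N(v_0)]$ follows without ever needing the SA gap of a large wheel to be small.

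Second, and more importantly, the paper does \emph{not} try to prove $2\OPT(H,c_H)\leqslant\alpha\cdot\SA_r(H,c_H)$ separately for each output of Theorem~\ref{thm:localratio}. Instead it bounds the \emph{size} of $H=G[N_{\leqslant 2}[v_0]]$ once and for all: a clique of size $k=2/\eps$ together with its distinguishers (using \cite[Lemma~3]{FJS20}) forces a coordinate $\geqslant(k-1)/(2k-1)\geqslant 1/(2+\eps)$, so $\omega(G[N(v_0)])<2/\eps$; the star inequality on $K_{1,k}$ gives the same bound on $\alpha(G[N(v_0)])$; and since $G[N(v_0)]$ is chordal hence perfect, $|N(v_0)|\leqslant\omega\cdot\alpha<4/\eps^2$ and thus $|N_{\leqslant 2}[v_0]|\leqslant 1+16/\eps^4=:r$. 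Now the entire $H$ has at most $r$ vertices, so fact~\emph{(b)} gives the inequality $\sum_v c_H(v)x_v\geqslant\OPT(H,c_H)$ \emph{exactly} valid for $\SA_r(G)$, i.e.\ $\SA_r(H,c_H)\geqslant\OPT(H,c_H)$, and your local-ratio display closes with $\alpha\geqslant 2$. This sidesteps the per-case analysis (star, hitting-clique construction, etc.) entirely; your reduction to ``$N(v_0)$ stable'' is neither needed nor achieved.
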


\begin{proof}
In order to simplify the notation below, let us assume that $2/\eps$ is integer. For instance, we could restrict to $\eps = 2^{-l}$ for some $l \in \Z_{\geqslant 1}$. This does not hurt the generality of the argument. We take $r := 1 + (2/\eps)^4$. We may assume that $\eps < 1/2$ since otherwise we invoke Theorem~\ref{thm:SA1CVD_UB} (taking $r=1$ suffices in this case).

Let $(G,c)$ be a counterexample to the theorem, with $|G|$ minimum. By Lemma ~\ref{lem:itrounding}.(i), for every optimal solution $x$ to $\SA_r(G,c)$, every vertex $v \in V(G)$ has $x_v < 1/(2+\eps)$. By Lemma~\ref{lem:itrounding}.(ii), $G$ is twin-free (and connected). 

We will use the following fact several times in the proof: for all $R \subseteq V(G)$ with $|R| \leqslant r$ and every $x \in \SA_r(G)$, the restriction of $x$ to the variables in $R$ is a convex combination of hitting sets of $G[R]$.  This is easy to see since, denoting by $x_R$ the restriction of $x$, we get $x_R \in \SA_r(G[R])$ and the Sherali-Adams hierarchy is known to converge in at most ``dimension-many'' rounds, see for instance~\cite{conforti2014integer}.

First, we claim that $G$ has no clique of size at least $2/\eps$. Suppose otherwise. Let $C$ be a clique of size $k := 2/\eps$ and let $D$ be a minimal set such that each edge of $C$ has a distinguisher in $D$. Let $H := G[C \cup D]$. Then, following the construction from Section \ref{sec:truetwins}, one can obtain a cost function $c_H$ such that $c_H(H) = 2k-1$, and $c_H(X) \geqslant k-1$ for any hitting set $X$ of $H$. See \cite[Lemma~3]{FJS20} for the full construction, whose proof also shows that $|H| \leqslant 2k-1 \leqslant r$.
Since every valid inequality supported on at most $r$ vertices is valid for $\SA_r(G)$, the inequality $\sum_v c_H(v) x_v \geqslant k-1$ is valid for $\SA_r(G)$. Since $c_H(H) = 2k-1$, this implies that for all $x \in \SA_r(G)$, there is some vertex $a \in V(H)$ with $x_a \geqslant (k-1)/(2k-1)$. Since $(k-1)/(2k-1) \geqslant 1/(2+\eps)$, we get a contradiction. This proves our first claim.

Second, we claim that for every $v_0 \in V(G)$, the subgraph of $G$ induced by the neighborhood $N(v_0)$ has no stable set of size at least $2/\eps$. The proof is similar to that for cliques given above, except that this time we let $H$ be the induced star $K_{1,k}$ with apex $v_0$ and $k = 2/\eps$. The cost function $c_H$ given by Lemma~\ref{lem:hittingclique2good} has $c_H(v_0) = k-1$ and $c_H(v) = 1$ for all $v \in S$. Notice that once again $c_H(H) = 2k-1$. The \emph{star inequality} $\sum_v c_H(v) x_v \geqslant k-1$ is valid for $\SA_r(G)$, which guarantees that for every $x \in \SA_r(G)$ there is some $a \in V(H)$ which has $x_a \geqslant (k-1)/(2k-1) \geqslant 1/(2+\eps)$. This establishes our second claim.

Third, we claim that the neighborhood of every vertex $v_0$ induces a chordal subgraph of $G$. Suppose that $C$ is a hole in $G[N(v_0)]$. We first deal with the case $|C| \leqslant r-1 = (2/\eps)^4$. We can repeat the same proof as above, letting $H$ be the induced wheel on $V(C) + v_0$ and using the cost function $c_H$ defined in the proof of Lemma~\ref{lem:wheel}. Consider the \emph{wheel inequality} $\sum_{v} c_H(v) x_v \geqslant k-3$, where $k := |H| = |C|+1$. Since the wheel has at most $r$ vertices, the wheel inequality is valid for $\SA_r(G)$. Since $c_H(H) = 2k-6 = 2(k-3)$, for every $x \in \SA_r(G)$, there is some $a \in V(H)$ which has $x_a \geqslant 1/2 \geqslant 1/(2+\eps)$. This concludes the case where $|C|$ is ``small''.

Now, assume that $|C| \geqslant r$, and consider the wheel inequality with right-hand side scaled by $2/(2+\eps)$.
Suppose this inequality is valid for $\SA_r(G)$. This still implies that some vertex $a$ of $H$ has $x_a \geqslant 1/(2+\eps)$, for all $x \in \SA_r(G)$, which produces the desired contradiction. It remains to prove that the scaled wheel inequality is valid for $\SA_r(G)$.

  Let $F$ denote any $r$-vertex induced subgraph of $H$ that is a fan.\footnote{A \emph{fan} is a graph obtained from a path by adding an apex vertex.} Hence, $F$ contains $v_0$ as an apex vertex, plus a path on $r-1$ vertices. Letting $c_F(v_0) := r-3 - \lfloor (r-1) / 3 \rfloor$ and $c_F(v) := 1$ for $v \in V(F-v_0)$, we get the inequality $\sum_{v} c_F(v) x_v \geqslant r-3$, which is valid for $\SA_r(G)$. By taking all possible choices for $F$, and averaging the corresponding inequalities, we see that the inequality
\begin{align*}
&\left( r-3 - \left\lfloor \frac{r-1}{3} \right\rfloor \right) x_{v_0} 
+ \frac{r-1}{k-1} \sum_{v \in V(H-v_0)}  x_v \geqslant r-3\\
\iff & \frac{k-1}{r-1} \left( r-3 - \left\lfloor \frac{r-1}{3} \right\rfloor \right) x_{v_0} 
+ \sum_{v \in V(H-v_0)}  x_v \geqslant \frac{k-1}{r-1}(r-3)
\end{align*}
is valid for $\SA_r(G)$. It can be seen that this inequality dominates the scaled wheel inequality, in the sense that each left-hand side coefficient is not larger than the corresponding coefficient in the scaled wheel inequality, while the right-hand side is not smaller than the right-hand side of the scaled wheel inequality. Therefore, the scaled wheel inequality is valid for $\SA_r(G)$. This concludes the proof of our third claim.

By the first, second and third claim\footnote{The first inequality follows since $|H|\leqslant \alpha(H)\cdot\omega(H)$, for every perfect graph $H$.}, $|N(v_0)| \leqslant \omega(G[N(v_0)]) \cdot \alpha(G[N(v_0)]) \leqslant 4/\eps^2$ for all choices of $v_0$. This implies in particular that $|N_{\leqslant 2}[v_0]| \leqslant 1+16/\eps^4 = r$. Now let $H := G[N_{\leqslant 2}[v_0]]$.
Theorem \ref{thm:localratio} applies since $G$ is twin-free, by our second claim. Let $c_H$ be the corresponding cost function. 
The inequality $\sum_{v} c_H(v) x_v \geqslant \OPT(H,c_H)$ is valid for $\SA_r(G)$.

Let $\lambda^*$ be defined as in Step~\ref{step:lambda_star} of Algorithm~\ref{algo}, and let $a \in V(G)$ denote any vertex such that $(c - \lambda^* c_H)(a) = 0$. By minimality of $G$, there exists in $(G',c') := (G-a,c - \lambda^* c_H)$ a minimal hitting set $X'$ of cost $c'(X') \leqslant (2+\eps)\SA_r(G',c')$. We let $X := X'$ in case $X'$ is a hitting set of $G$, and $X := X' + a$ otherwise. Assume that $X = X' + a$, the other case is easier. We have 
\begin{align*}
c(X) &= c'(X') + \lambda^* c_H(X)\\
&\leqslant (2+\eps) \SA_r(G',c') + \lambda^* (c_H(H)-1)\\
&\leqslant (2+\eps) \SA_r(G',c') + 2 \lambda^* \OPT(H,c_H)\\
&\leqslant (2+\eps) \left(\SA_r(G',c') + \lambda^* \OPT(H,c_H)\right)\,.
\end{align*}
By LP duality, we have $\SA_r(G,c) \geqslant \SA_r(G',c') + \lambda^* \OPT(H,c_H)$. This implies that $c(X) \leqslant (2+\eps) \SA_r(G,c)$, contradicting the fact that $(G,c)$ is a counterexample. This concludes the proof.
\end{proof}

We now complement the result above by showing that every  
LP relaxation of \cVD{} with (worst case) integrality gap at most $2-\eps$ must have super-polynomial size. The result is a simple consequence of an analogous result of \cite{bazzi2019no} on the integrality gap of \VC, and of the straightforward reduction from \VC{} to \cVD{}.

\begin{prop}\label{prop:EFlowerbound}
For infinitely many values of $n$, there is a graph $G$ on $n$ vertices such that every size-$n^{o(\log n/ \log \log n)}$ LP relaxation of \cVD{} on $G$ has integrality gap $2 - o(1)$.
\end{prop}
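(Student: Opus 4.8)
The plan is to derive this as a black-box consequence of the known extension-complexity lower bound for \VC{} due to Bazzi, Fiorini, Pokutta, and Svensson~\cite{bazzi2019no}, combined with the trivial approximation-preserving reduction from \VC{} to \cVD{}. Recall that for a graph $G$, a \emph{vertex cover} is a set $X \subseteq V(G)$ meeting every edge, and the reduction we need is: subdivide every edge of $G$ once to obtain a graph $G'$. Then $G'$ has no triangles, and the induced $P_3$'s of $G'$ are in natural correspondence with certain short paths; in particular any minimal vertex cover of $G$ lifts to a hitting set of $G'$ of the same cost (put the original vertices in, never the subdivision vertices), while conversely a hitting set of $G'$ can be ``pushed'' onto original vertices without increasing cost. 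More carefully, one uses the standard fact that \cVD{} on $G'$ with cost $1$ on original vertices and a large cost $M$ (or $+\infty$) on subdivision vertices has optimum equal to the minimum vertex cover of $G$. So $\OPT(\cVD,G',c) = \OPT(\VC,G)$.

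The key steps, in order, are as follows. First, I would quote the precise statement from~\cite{bazzi2019no}: for infinitely many $n$ there is an $n$-vertex graph $G$ such that every LP relaxation (in the sense of Chan-Lee-Raghavendra-Steurer, allowing extended formulations) of \VC{} on $G$ of size $n^{o(\log n / \log\log n)}$ has integrality gap at least $2 - o(1)$. Second, I would set $G' := $ the once-subdivided graph, which has $N := n + |E(G)| \leqslant n^2$ vertices, so that a bound of the form $N^{o(\log N/\log\log N)}$ is the same asymptotic regime as $n^{o(\log n/\log\log n)}$ (the polynomial blow-up only changes constants in the exponent bound). Third, and this is the crux, I would show that any LP relaxation of \cVD{} on $G'$ of size $s$ yields an LP relaxation of \VC{} on $G$ of size at most $s$ (or $s$ plus a small additive term) with no worse integrality gap. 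This is done by composing the LP relaxation of \cVD{} with the reduction: given the lift $\pi^X$ for hitting sets $X$ of $G'$ and the affine cost functionals $f_c$, one defines, for a vertex cover $Y$ of $G$, the point $\pi^{Y} := \pi^{\hat Y}$ where $\hat Y$ is the corresponding hitting set of $G'$, and for a cost $c$ on $V(G)$ one uses $f_{\hat c}$ where $\hat c$ extends $c$ by $0$ (or by the slack trick) on subdivision vertices — and then checks properties (i)--(iii) of the LP-relaxation definition transfer, while $\LP$ of the composed relaxation upper-bounds $\OPT(\VC)$ in the right direction so that the integrality gap does not decrease. Finally, chaining: if \cVD{} had a relaxation of size $N^{o(\log N/\log\log N)}$ with gap $2 - \eps$, we would obtain a \VC{} relaxation of comparable size with gap $\leqslant 2-\eps$ on the hard instances of~\cite{bazzi2019no}, a contradiction; hence the stated lower bound for \cVD{}.

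I expect the main obstacle to be the bookkeeping in the third step: one must verify that the reduction is genuinely ``LP-relaxation preserving'' in the formal Chan-Lee-Raghavendra-Steurer sense, i.e. that from a feasible LP solution for \cVD{} on $G'$ one can read off a feasible LP solution for \VC{} on $G$ of no greater cost, and that integral points correspond correctly. The subtlety is handling subdivision vertices: one wants them never to be selected by an optimal/relevant solution, which the standard argument achieves either by assigning them cost $0$ and observing that including a subdivision vertex is dominated by including one of its two original endpoints (since in $G'$ a subdivision vertex has degree $2$ and lies only in $P_3$'s centered at it), or by a limiting/large-weight argument. I would also need to make sure the $o(\log n/\log\log n)$ bound is robust under the polynomial change of vertex count $n \mapsto N \leqslant n^2$, which it is, since $\log N = \Theta(\log n)$ and $\log\log N = \Theta(\log\log n)$. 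None of this is deep; the proof is genuinely a one-paragraph reduction, which is why the statement is labeled a Proposition rather than a Theorem, and I would present it as such, citing~\cite{bazzi2019no} for the \VC{} bound and citing the CLRS framework~\cite{CLRS13} for the composition lemma (which is standard: LP relaxations compose under approximation-preserving reductions that are affine in the costs).
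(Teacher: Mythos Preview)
Your high-level plan is exactly right and matches the paper: invoke the \VC{} lower bound of~\cite{bazzi2019no} and transfer it to \cVD{} via a reduction that preserves LP relaxations. However, the specific reduction you propose---subdividing every edge once---does \emph{not} work, and this is a genuine gap. If $G'$ is the once-subdivision of $G$ and $Y$ is a vertex cover of $G$, then $Y$ is in general \emph{not} a hitting set of $G'$: take any $u \notin Y$ with $\deg_G(u) \geqslant 2$. The subdivision vertices $w_{uv}$ for $v \in N_G(u)$ all survive in $G' - Y$ and together with $u$ induce a star $K_{1,\deg_G(u)}$, which contains a $P_3$. (For $G = K_4$ and $Y = \{1,2,3\}$, the graph $G' - Y$ contains the induced $P_3$ on $w_{14}, 4, w_{24}$.) Your remark that a subdivision vertex ``lies only in $P_3$'s centered at it'' is false for the same reason: $w_{uv}$ is an \emph{endpoint} of the $P_3$ on $w_{uv}, u, w_{uv'}$. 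Consequently, neither of your cost tricks salvages the correspondence: with a large cost $M$ on subdivision vertices, $\OPT$ of \cVD{} on $G'$ can strictly exceed the minimum vertex cover of $G$ (it equals $4$, not $3$, for $K_4$); with cost $0$ on subdivision vertices, the \cVD{} optimum on $G'$ is $0$, since removing all subdivision vertices is free and leaves an edgeless graph.

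The paper uses a different (and correct) reduction: attach a pendant vertex $v'$ to every $v \in V(G)$, obtaining $G^+$ on $2n$ vertices. Then $U \subseteq V(G)$ is a hitting set of $G^+$ if and only if $U$ is a vertex cover of $G$: if $U$ is a vertex cover, every surviving original vertex sees only its pendant, so $G^+ - U$ is a matching plus isolated vertices; conversely, each edge $uv \in E(G)$ gives the induced $P_3$ on $u', u, v$, forcing $u \in U$ or $v \in U$. The paper extends a cost $c$ on $V(G)$ to $c^+$ on $V(G^+)$ by assigning each pendant the prohibitively large cost $\sum_{u \in V(G)} c(u)$, so that any cheap hitting set of $G^+$ can be pushed back onto $V(G)$. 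With this reduction in place, the remainder of your outline---composing the LP relaxation with the reduction, checking conditions (i)--(iii), and noting that the size bound survives the linear blow-up $n \mapsto 2n$---goes through exactly as you describe.
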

\begin{proof}
In \cite{bazzi2019no} a similar result is proved for LP-relaxations of \VC: for infinitely many values of $n$, there is a graph $G$ on $n$ vertices such that every size-$n^{o(\log n/ \log \log n)}$ LP relaxation of \VC{} on $G$ has integrality gap at least $2 - \eps$, where $\eps=\eps(n)=o(1)$ is a non-negative function. 

Let $G$ be such a graph, and let $G^+$ be the graph obtained from $G$ by attaching a pendant edge to every vertex. It is easy to see that $U \subseteq V(G)$ is a hitting set for $G^+$ if and only if $U$ is a vertex cover of $G$. 

Toward a contradiction, suppose that $Ax \geqslant b$ is a size-$n^{o(\log n / \log \log n)}$ LP relaxation of \cVD{} on $G^+$ with integrality gap at most $2-\delta$, for a fixed $\delta > \eps$ (where $x \in \R^d$ for some dimension $d$ depending on $G$). For every $c^+ \in\Q^{V(G^+)}_{\geqslant 0}$ there exists a hitting set $X$ of $G^+$ such that $c^+(X) \leqslant (2-\delta) \LP(G^+,c^+)$.

We can easily turn $Ax \geqslant b$ into an LP relaxation for \VC. For every vertex cover $U$ of $G$, we let the corresponding point be the point 
$\pi^U \in \R^d$ for $U$ seen as a hitting set in $G^+$. For every $c \in \Q_{\geqslant 0}^{V(G)}$, we define $c^+ \in \Q^{V(G^+)}_{\geqslant 0}$ via $c^+(v) := c(v)$ for $v \in V(G)$, and $c^+(v) := \sum_{u \in V(G)} c(u)$ for $v \in V(G^+) \setminus V(G)$. Then, we let the affine function $f_c$ for $c$ be the affine function $f_{c^+}$ for $c^+$.

Since the integrality gap of $Ax \geqslant b$, seen as an LP relaxation of \cVD, is at most $2 - \delta$, for every $c \in \Q_{\geqslant 0}^{V(G)}$ there exists a hitting set $X$ of $G^+$ whose cost is at most $(2-\delta) \LP(G^+,c^+)$, where $c^+$ is the cost function corresponding to $c$. If $X$ contains any vertex of $V(G^+) \setminus V(G)$, we can replace this vertex by its unique neighbor in $V(G)$, without any  increase in cost. In this way, we can find a vertex cover $U$ of $G$ whose cost satisfies $c(U) \leqslant c^+(X) \leqslant (2-\delta) \LP(G^+,c^+) = (2-\delta)\LP(G,c)$. Hence, the integrality gap of $Ax \geqslant b$ as an LP relaxation of \VC{} is also at most $2-\delta < 2 - \eps$. As the size of $Ax \geqslant b$ is $n^{o(\log n / \log \log n)}$, this provides the desired contradiction.
\end{proof}

We point out that the size bound in the previous result can be improved. Kothari, Meka and Raghavendra~\cite{KMR17} have shown that for every $\eps > 0$ there is a constant $\delta = \delta(\eps) > 0$ such that no LP relaxation of size less than $2^{n^{\delta}}$ has integrality gap less than $2 - \eps$ for Max-CUT. Since Max-CUT acts as the source problem in~\cite{bazzi2019no}, one gets a $2^{n^{\delta}}$ size lower bound for \VC{} in order to achieve integrality gap 
$2 - \eps$. This also follows in a black-box manner from~\cite{KMR17} and~\cite{BPR18}. The proof of Proposition~\ref{prop:EFlowerbound} shows that the same bound applies to \cVD.
\section{Conclusion} \label{sec:conclusion}

In this paper we provide a tight approximation algorithm for the cluster vertex deletion problem (\cVD). Our main contribution is the efficient construction of a local cost function on the vertices at distance at most $2$ from any vertex $v_0$ such that every minimal hitting set of the input graph has local cost at most \emph{twice} the local optimum. If the subgraph induced by $N(v_0)$ (the first neighborhood of $v_0$) contains a hole, or a $2P_3$, then this turns out to be straightforward. The most interesting case arises when the local subgraph $H$ is twin-free, has radius $1$, and moreover $H[N(v_0)] = H - v_0$ is chordal and $2P_3$-free. Such graphs are very structured, which we crucially exploit.

Lemma~\ref{lem:without_v0} allows us to define the local cost function on the vertices distinct from $v_0$ and then later adjust the cost of $v_0$. We point out that condition~\eqref{it:c3} basically says that the local cost function should define a hyperplane that ``almost'' separates the hitting set polytope and the clique polytope of the chordal, $2P_3$-free graph $H - v_0$. This was a key intuition which led us to the proof of Theorem~\ref{thm:localratio}. If these polytopes were disjoint, this would be easy. But actually it is not the case since they have a common vertex (as we show, $H - v_0$ has a hitting clique).

One natural question arising from our approach of \cVD{} in general graphs is the following: is the problem polynomial-time solvable on \emph{chordal} graphs? This seems to be a non-trivial open question, also mentioned in \cite{cao2018vertex}, where similar vertex deletion problems are studied for chordal graphs. It could well be that \cVD{} in general chordal graphs is hard. Now, what about chordal, \emph{$2P_3$-free} graphs? We propose this last question as our first open question.

Our second contribution are polyhedral results for the \cVD~problem, in particular with respect to the tightness of the Sherali-Adams hierarchy. Our results on Sherali-Adams fail to match the 2-approximation factor of our algorithm (by epsilon), and we suspect this is not by chance. We believe that, already for certain classes of triangle-free graphs, the LP relaxation given by a bounded number of rounds of the Sherali-Adams hierarchy has an integrality gap strictly larger than $2$. 
Our intuition goes as follows. Consider the \emph{star inequality} $(k-1) x_{v_0} + \sum_{i=1}^k x_{v_i} \geqslant k-1$, valid when $N(v_0)=\{v_1,\dots, v_k\}$ is a stable set.
Capturing all star inequalities is sufficient to achieve an integrality gap of at most 2 for all triangle-free graphs ~\cite[Algorithm 1]{FJS20}. However, we suspect that Sherali-Adams will have a hard time recovering these in a constant number of rounds. The star inequality is very similar to the clique inequality $\sum_{i=1}^k x_{v_i} \geqslant k-1$, which is valid for \VC{} when $\{v_1,\dots, v_k\}$ is a clique. It is known that Sherali-Adams is unable to capture all clique inequalities in a constant number of rounds of the \VC{} relaxation (see ~\cite[Section 6.1]{laurent03} for an equivalent statement on clique inequalities for the stable set polytope).
Whether this intuition is accurate is our second open question. 

As mentioned already in the introduction, we do not know any polynomial-size LP or SDP relaxation with integrality gap at most $2$ for \cVD. In order to obtain such a relaxation, it suffices to derive each valid inequality implied by Lemmas~\ref{lem:wheel}, \ref{lem:2p3free}, \ref{lem:hittingclique2good} and also somehow simulate Lemma~\ref{lem:get_rid_of_true_twins}. Here, different techniques to construct extended formulations (see for instance \cite{aprile2020extended, aprile2021extended, tiwary2020extension}) could be used. A partial result in this direction is that the star inequality has a bounded-degree sum-of-squares proof. 
This implies that a bounded number of rounds of the Lasserre hierarchy provides an SDP relaxation for \cVD{} with integrality gap at most $2$, whenever the input graph is triangle-free. This should readily generalize to the wheel inequalities of Lemma~\ref{lem:wheel} and of course to the inequality of Lemma~\ref{lem:2p3free} (since the underlying graph has bounded size). However, we do not know how for instance to derive the inequalities of Lemma~\ref{lem:hittingclique2good}.  We leave this for future work as our third open question.

Our fourth open question: what is the best running time for Algorithm~\ref{algo}? We think that it is possible to improve on our $\mathcal{O}(n^4)$ upper bound.

Another intriguing problem is to what extent our methods can be adapted to hitting set problems in other $3$-uniform hypergraphs. We mention an open question due to L{\'{a}}szl{\'{o}} V\'egh~\cite{VeghPC}: for which classes of $3$-uniform hypergraphs and which $\eps > 0$ does the hitting set problem admit a $(3-\eps)$-approximation algorithm?

As mentioned in the introduction, \FVST{} (feedback vertex set in tournaments) is another hitting set problem in a $3$-uniform hypergraph, which is also UGC-hard to approximate to a factor smaller than $2$. There is a recent \emph{randomized} $2$-approximation algorithm~\cite{LMMPPS20}, but no deterministic (polynomial-time) algorithm is known. Let us repeat here the relevant open question from~\cite{LMMPPS20}: does \FVST{} admit a deterministic $2$-approximation algorithm?  

\section*{Acknowledgements}

We are grateful to Daniel Lokshtanov for suggesting Lemma~\ref{lem:chordal+2p3free}, which allowed us to simplify our algorithm and its proof. We also thank two anonymous referees for their helpful comments, which improved the presentation of the paper.

\bibliographystyle{abbrv}
\bibliography{references}


%
%


\end{document}